\numberwithin{equation}{section}
\theoremstyle{plain}
\newtheorem{thm}{Theorem}[section]
\newtheorem{prp}[thm]{Proposition}
\newtheorem{cor}[thm]{Corollary}
\newtheorem{lem}[thm]{Lemma}
\newcommand{\e}{\varepsilon}
\newcommand{\C}{\mathbb{C}}
\newcommand{\R}{\mathbb{R}}
\newcommand{\N}{\mathbb{N}}
\newcommand{\U}{\mathcal{U}}
\newcommand{\dist}{\mathrm{dist}}
\newcommand{\edge}{\mathrm{edge}}
\def\be{\begin{equation}}
\def\ee{\end{equation}}
\begin{document}

\title{Doubling coverings of algebraic hypersurfaces}

\author{Omer Friedland}
\address{Institut de Math\'ematiques de Jussieu, Universit\'e Pierre et Marie Curie (Paris 6), 4 Place Jussieu, 75005 Paris, France.}
\email{omer.friedland@imj-prg.fr}

\author{Yosef Yomdin}
\address{Department of Mathematics, The Weizmann Institute of Science, Rehovot 76100, Israel.}
\email{yosef.yomdin@weizmann.ac.il}
\thanks{Research of the second author was supported by the Israel Science Foundation, grant No. ISF 13/709}

\begin{abstract}
A doubling covering $\U$ of a complex $n$-dimensional manifold $Y$ consists of analytic functions $\psi_j:B_1\to Y$, each function being analytically extendable, as a mapping to $Y$, to a four times larger concentric ball $B_4$.

Main result of this paper is an upper bound on the minimal number $\kappa({\U})$ of charts in doubling coverings of a manifold $Y$, being a compact part of a non-singular level hypersurface $Y=\{P=c\}$, where $P$ is a polynomial on $\C^n$ with non-degenerated critical points. We show that $\kappa({\U})$ is of order $\log({1}/{\rho})$, where $\rho$ is the distance from $Y$ to the singular set of $P$.

Our main motivation is that doubling coverings form a special class of ``smooth parameterizations'', which are used in bounding entropy type invariants in smooth dynamics on one side, and in bounding density of rational points in diophantine geometry on the other. Complexity of smooth parameterizations is a key issue in some important open problems in both areas.

We also present connections between doubling coverings and doubling inequalities for analytic functions $f$ on $Y$, which compare the maxima of $|f|$ on couples of compact domains $\Omega\subset G$ in $Y$. We shortly indicate connections with Kobayashi metric and with Harnack inequality.
\end{abstract}

\maketitle

\section{Introduction}

Let $Y$ be a complex $n$-dimensional manifold, and let $G\subset Y$ be a compact domain in $Y$. Let $B_1$ be the unit ball in $\C^n$. A doubling covering $\U$ of $G$ in $Y$ is a finite collection of analytic univalent functions $\psi_j:B_1\to Y$ satisfying the following conditions:

\smallskip

1) The images (aka charts) $U_j=\psi_j(B_1)$ cover $G$.

\smallskip

2) Each $\psi_j$ is extendible to a mapping $\tilde\psi_j:B_4\to Y$, which is univalent in a neighborhood of $B_4$, where $B_4\subset\C^n$ is the four times larger concentric ball of $B_1$. If $B_4$ is replaced by $B_\gamma, \gamma > 1$, then the covering is called $\gamma$-doubling.

\smallskip

Doubling coverings provide, essentially, a conformally invariant version of the Whitney's ball coverings of a domain $W\subset {\mathbb R}^n$, introduced in \cite{Whi}. These coverings consist of balls $B_j$ such that larger concentric balls $\gamma B_j$ are still in $W$ (compare Section \ref{Sec:Zig.Cald} below). In our definition we replace $W$ by a complex manifold $Y$, while the balls $B_j$ are replaced by the charts $U_j$. See a survey in \cite[Chapter 6]{Bru.Bru} for extensions and developments of Whitney coverings in other directions.

\smallskip

Introduction and study of doubling coverings in this paper is motivated mostly by the fact that they form a special class of ``smooth parameterizations'', which are used in bounding entropy type invariants in smooth dynamics on one side, and in bounding density of rational points in diophantine geometry on the other.
In fact, a doubling covering is a complex counterpart of an ``analytic parameterization'', as introduced in \cite{Yom3} and further developed in \cite{Yom4, Yom5}. Other types of ``smooth parameterizations'' are $C^k$-ones, introduced in \cite{Yom1, Yom2} and studied in \cite{Bur, Gro, PW} and in other publications, where the size of the derivatives up to order $k$ is controlled. In ``mild parameterizations'', introduced in \cite{Pil1} and further studied in \cite{Pil2, Tho1}, and others, the growth rate of the derivatives up to infinity is controlled.
There are prominent open problems in dynamics and in diophantine geometry (see, e.g. \cite{BLY, PW, Yom3, Yom5} and references therein), where constructing analytic and mild parameterizations, and bounding their complexity, are expected to be important. Very recently an important progress in these problems was achieved in \cite{Bin.Nov1,Bin.Nov2,Clu.Pil.Wil}, in particular, via introducing a new type of ``ramified analytic parameterization''. We expect that these results will strongly contribute to a better understanding of various types of smooth parameterizations, and of their mutual relations. 

\smallskip

Let $Y$ be a non-singular level hypersurface
$$
Y=Y_c=\{P=c\}
$$
where $P$ is a polynomial of degree $d\ge2$ on $\C^n$, with only isolated and non-degenerate critical points. Let $G_c=Y_c\cap Q$, where $Q=Q^n$ is a unit cube in $\C^n$, and let $\delta>0$ be the distance of $Y_c$ to the critical points of $P$. We are interested in doubling coverings $\U$ of $G_c$ in $Y_c$, as $c$ approaches a certain critical value of $P$. In this case $\delta \to 0$, and the geometric complexity of $Y_c$ (in particular, its curvature) near the critical points of $P$ ``blows up''. One can expect that the minimal number $\kappa(\U)$ of charts in doubling coverings $\U$ of $G_c$ in $Y_c$ also tends to infinity. However, this problem turns out to be rather delicate: it was shown in \cite{Gro} that for each fixed smoothness $k$ the minimal number of charts in $C^k$-parameterizations of $Y_c$ remains uniformly bounded, in terms of $n$ and $d$ only.

\smallskip

The main result of this paper is an upper bound on the complexity $\kappa(\U)$ of a doubling covering $\U$ of $G_c$ in $Y_c$ of the form 
$$
\kappa(\U) \leq C(P)\log(\frac{1}{\delta}).
$$
In some special cases we provide also the lower bound for $\kappa(\U)$, of the same form. So for doubling analytic coverings, in a strict contrast with $C^k$-parameterizations, their complexity, at least in some special cases, grows as a logarithm of the distance to complex singularities. We conjecture that this result remains true also for polynomials $P$ with possibly degenerate (and non-isolated) singularities.

\smallskip
 
As the second main topic of this paper, we present various types of doubling inequalities, and demonstrate a very general explicit connection between them and chains of charts in doubling coverings on $Y$, closely resembling a well-known construction, applied in Harnack-type inequalities.

\smallskip

Let $\Omega\subset G$ be compact domains in $Y$. Let $f$ be an analytic function in a neighborhood of $G$ in $Y$, the doubling constant of $f$ with respect to $\Omega$ and $G$ is the ratio
$$
DC_f(G, \Omega)={\max_{G}|f(z)|}/{\max_{\Omega}|f(z)|}.
$$

Doubling inequalities provide an upper bound on $DC_f(G, \Omega)$ for various classes of analytic functions $f$ on $Y$.

\smallskip

In recent years they have been intensively studied for algebraic functions, in connection with various problems in harmonic analysis and potential theory, differential equations, diophantine geometry, probability, complexity, etc. (see e.g. \cite{Bru,FN,RY} and references therein).
However, in these results the variety $Y$ on which the doubling inequalities are considered, is usually fixed, while the degree of the restricted polynomials grows.
An important question of the dependence of the doubling constant on $Y$ (in particular, in families like $Y_c$ above), remaind largely open. Using our main result on the complexity of the doubling coverings of $Y_c$, we show that for polynomials $S$ of degree $d_1$ restricted to $Y_c$ we have
$$
DC_S(G, \Omega) \leq (\frac{C_1}{\delta})^{C_2},
$$
with $C_1,C_2$ depending on $P$, and on the degree $d_1$ of the restricted polynomial $S$.

\smallskip

We believe that the results of the present paper provide a step towards a better understanding of the behavior of the doubling constant in families of varieties $Y$. In particular, we expect that the assumption of non-degenerate singularities of $P$ can be dropped, while preserving the polynomial dependence of the doubling constant on $\delta$.

\smallskip

The paper is organized as follows: in Section \ref{Sec:More.Doub.Cov} we introduce some notations and definitions with respect to doubling coverings, and discuss their connection to Kobayashi distance. The complexity of doubling coverings of $Y$ depends only on a complex analytic structure of $Y$, and so one can hope to define, in its terms, certain invariants of $Y$. We make an initial step in this direction, showing that the length of chains (or the total complexity $\kappa({\U}))$ in doubling coverings $\U$, bounds the Kobayashi distance on $Y$. We also give a special example of a doubling covering {\it with balls} of a punctured cube, which is obtained via a construction in the spirit of Whitney's covering lemma (\cite{Whi}, see also \cite{St}), which provides a bound, depending only on the number of the removed points, but not on their position. We also discuss briefly the behavior of doubling chains with respect to Harnack-type inequalities for harmonic functions.

\smallskip

Section \ref{Sec:Cov.Alg.Hyp} is central in our presentation, we prove here our main result, providing ``controlled'' doubling coverings for algebraic hypersurfaces. We provide an explicit construction of a doubling covering $\U$ of $G$ in $Y$, with the number of charts $\kappa({\U})$ of order $C(n,d)\log({c(n,d)}/{\rho})$. It is based on the special example of a doubling covering {\it with balls} of a punctured cube, and a quantitative implicit function theorem that we also provide.

\smallskip

Section \ref{Sec:Doub.Ine} is of technical nature, preparing some basic results on doubling inequalities for $p$-valent functions on balls.

\smallskip

In Section \ref{Sec:Gen.Doub.Ine} we give a very general form of a doubling inequality for analytic functions $f$ on a manifold $Y$ in terms of doubling coverings $\U$ of $Y$. We do so via continuation along chains of charts in $\U$. As a consequence of our upper bound on $\kappa({\U})$, and this general doubling inequality, we obtain a doubling inequality on $Y$ for the restrictions to $Y$ of polynomials of certain degree.

\smallskip

Finally, Section \ref{Sec:examples} provides some examples. In particular, inverting the inequality of Section \ref{Sec:Gen.Doub.Ine}, and presenting specific polynomials $S$ with a large doubling constant on $Y$, we obtain, in certain specific cases, a lower bound, of the same order $\log({1}/{\delta})$, on the number $\kappa({\U})$ of charts in doubling coverings $\U$ of $Y$.

\section{Doubling coverings} \label{Sec:More.Doub.Cov}

In this section we introduce some notations and definitions with respect to doubling coverings, and discuss their connection to Kobayashi distance and Harnack-type inequalities. We also construct a special example of a doubling covering of a punctured cube in $\C^n$.

\smallskip

Let $Y$ be a complex $n$-dimensional manifold, let $G\subset Y$ be a compact domain in $Y$, and let $\U$ be a doubling covering of $G$ in $Y$.

\smallskip

We call two charts $U_i$ and $U_j$ in $\U$ neighboring, if $U_i\cap U_j\not=\emptyset$. For two neighboring charts $U_i$ and $U_j$ we define the intersection radius $\rho(U_i, U_j)$ as the maximal radius $\rho >0$ such that both $\psi^{-1}_i(U_i\cap U_j)\subset B_1\subset\C^n$, and $\psi^{-1}_j(U_i\cap U_j)\subset B_1\subset\C^n$ contain subballs of radius $\rho$ (not necessarily concentric with $B_1$). In a similar way for $\Omega\subset G$ a subdomain in $G$, and a chart $U_j\in\U$ we define $\rho(U_j, \Omega)$ as the maximal radius $\rho>0$ such that $\psi^{-1}_j(U_j\cap\Omega)\subset B_1$ contains a subball of radius $\rho$. We put $\rho(\U, \Omega)=\min_{j}\rho(U_j, \Omega)$.

\smallskip

A chain $Ch$ in a covering $\U$ is a set $\{j_1, j_2, \dots, j_n\}$ of pairwise different indices, such that $U_{j_p}$, $U_{j_{p+1}}$ are neighboring for each $p=1, \dots, n-1$. The length $n$ of the chain $Ch$ is denoted by $\ell(Ch)$. The collection $CH(z, \Omega, {\U})$ consists of all the chains $Ch=\{j_1, j_2, \dots, j_n\}$ in $\U$ such that $\rho(U_{j_1}, \Omega)>0$, while $z\in U_{j_n}$.

\subsection{Doubling coverings and Kobayashi metric} \label{Sec:Kobayashi}

Let $Y$ be a complex $n$-dimensional manifold, and let $p, q\in Y$. The Kobayashi distance (or, more accurately, pseudo-distance) $d(p, q)$ is defined as follows \cite{Kob}: choose points $p=p_0, p_1, \dots, p_{k-1}, p_k=q\in Y$, points $a_1, \dots, a_k$, $b_1, \dots, b_k$ in the unit disk $D_1\subset \C$, and holomorphic mappings $f_1, \dots, f_k$ from $D_1$ to $Y$, such that $f_i(a_i)=p_{i-1}$, $ f_i(b_i)=p_{i}$, $i=1, \dots, k$. Form a sum $\sum_{i=1}^k\rho(a_i, b_i)$, where $\rho$ is a Poincar\'e metric on $D_1$, and put $d(p, q)$ to be the infimum of these sums for all possible choices.

\begin{prp} \label{Kobayashi}
Let $G\subset Y$ be a connected compact domain, and let $p, q\in G$. Let $\U$ be a doubling covering of $G$ in $Y$, and let $Ch$ be a chain in $\U$ joining $p$ and $q$. Then, the Kobayashi distance $d(p, q)$ satisfies
$$
d(p, q)\le 3\ell(Ch)\le 3\kappa({\U}).
$$
\end{prp}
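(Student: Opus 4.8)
The plan is to reduce the Kobayashi distance bound to an estimate on a single ``step'' along the chain, i.e.\ between two points lying in a common chart of the doubling covering. Recall that the chain $Ch = \{j_1,\dots,j_\ell\}$ joining $p$ and $q$ consists of pairwise neighboring charts, so we may pick intermediate points $p = p_0, p_1, \dots, p_{\ell-1}, p_\ell = q$ with $p_0 \in U_{j_1}$, $p_\ell \in U_{j_\ell}$, and each consecutive pair $p_{i-1}, p_i$ lying in a common chart $U_{j_i}$ (for the interior indices, choose $p_i \in U_{j_i} \cap U_{j_{i+1}} \neq \emptyset$). By the triangle inequality for the Kobayashi pseudo-distance, $d(p,q) \le \sum_{i=1}^\ell d(p_{i-1}, p_i)$, so it suffices to show $d(p_{i-1}, p_i) \le 3$ whenever $p_{i-1}, p_i$ both lie in a single chart $U_{j}= \tilde\psi_j(B_4)$.

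First I would establish the per-chart bound. Two points $x = \psi_j(\zeta)$, $y = \psi_j(\eta)$ with $\zeta, \eta \in B_1 \subset \C^n$ admit a complex affine line in $B_4$ through $\zeta$ and $\eta$; restricting $\tilde\psi_j$ to a disk $D$ of radius, say, $2$ centered appropriately inside $B_4$ and passing through both $\zeta$ and $\eta$ gives a holomorphic map $D \to Y$. Rescaling to the unit disk $D_1$ and using the distance-decreasing property of holomorphic maps for the Kobayashi (Poincar\'e) metric, $d(x,y)$ is bounded by the Poincar\'e distance in $D_1$ between the two preimage points, which lie in a fixed compact subdisk (of radius $1/2$) of $D_1$ since $\zeta, \eta \in B_1$. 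That Poincar\'e diameter is an absolute constant; the constant $3$ in the statement is simply a convenient upper bound for it (with some room to spare, independent of $j$, $n$, and $Y$). This handles every interior step; for the two end steps the same argument applies verbatim since $p_0, p_1 \in U_{j_1}$ and $p_{\ell-1}, p_\ell \in U_{j_\ell}$.

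Summing the $\ell(Ch)$ steps gives $d(p,q) \le 3\,\ell(Ch)$, and since any chain has length at most the total number of charts, $\ell(Ch) \le \kappa(\U)$, yielding the second inequality. The one point requiring a little care — and the main ``obstacle,'' though it is mild — is the bookkeeping of which disk in $B_4$ to use and verifying that the preimages of $p_{i-1}, p_i$ under the rescaled map indeed stay in a fixed compact part of $D_1$; the factor $4$ in the definition of a doubling covering is exactly what provides the needed margin, so that a disk of radius $2$ about a suitable center sits inside $B_4$ and contains the full segment between any two points of $B_1$. Once the geometry is set up, the estimate is just the Schwarz–Pick lemma applied chart by chart, and no analyticity of $Y$ beyond the definition of doubling covering is used.
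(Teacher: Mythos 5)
Your proof is correct and follows essentially the same approach as the paper's: decompose along the chain with intermediate points in the successive intersections, and bound each step by pulling back the two points to $B_1$, joining them by a complex affine line, fitting a disk inside $B_4$ on that line, and applying the Schwarz--Pick/Poincar\'e estimate to $\tilde\psi_j$ restricted to that disk. The paper takes the full intersection disk $B_4\cap L$ (radius $\ge 3$, preimages land in $D_{1/3}$, per-step cost $\le 3/2$) where you use a radius-$2$ disk (preimages in $D_{1/2}$, per-step cost $2\ln 3<3$), but this is a cosmetic difference.
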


\begin{proof}
Let $U_1, \dots, U_l$ be the charts in $Ch$. Denote $p_0=p$, $p_l=q$, and for $i=1, \dots, l-1$ pick $p_i$ to be a point in $U_i\cap U_{i+1}$. Next, put $\tilde a_i=\psi_i^{-1}(p_{i-1})$, and $\tilde b_i=\psi_i^{-1}(p_{i})$ in $B_1$. Now, define an affine map $T_i:D_1\to B_4$, requiring the image $T_i(D_1)$ be the intersection disk $\tilde D$ of $B_4$ and of the complex line, passing through the points $\tilde a_i, \tilde b_i\in B_1$. Clearly, the radius of $\tilde D$ is at least $3$, while the points $\tilde a_i, \tilde b_i\in B_1$ belong to a concentric subdisk of $\tilde D$ of radius at most $1$.

\smallskip

Finally, we put $a_i=T_i^{-1}(\tilde a_i)$, and $b_i=T_i^{-1}(\tilde b_i)$, and take $f_i=\psi_i\circ T_i$. It remains to notice that for each $i$ our points $a_i, b_i$ belong to the concentric disk $D_{1/3}$ of radius $\frac{1}{3}$ in $D_1$, and hence $\rho(a_i, b_i)\le 3/2$. Indeed, the Poincar\'e metric on $D_1$ is given by $ds=\frac{2|dz|}{1-|z|^2}$. So inside $D_{1/3}$ we have $ds \le 9/4 |dz|$, and therefore the Poincar\'e distance $\rho(a_i, b_i)$ does not exceed $3/2$.
\end{proof}

\subsection{Doubling coverings and Harnack-type inequalities} \label{Sec:Harnack}

``Extension along chains'' of doubling charts, which we use in Section \ref{Sec:Doub.Ine} in order to obtain doubling inequalities, is one of the classical and widely used tools in study of Harnack-type inequalities for harmonic functions and, more generally, for solutions of certain classes of PDE's (see, e.g. \cite{Aik} and references therein).

\smallskip

There is, however, an essential difference: usually, only coverings with balls are used. The reason is that a general complex analytic change of variables preserves harmonic functions only in complex dimension one. In the case of two or more variables already linear changes of variables, if not dilations, destroy the condition $\Delta f =0$.

\smallskip

In our context, in Section \ref{Sec:Zig.Cald} below a doubling covering {\it with balls} is constructed for the punctured cube in $\R^n$. We plan to extend this construction to the complements of algebraic varieties of higher dimensions, and apply it to Harnack-type inequalities for harmonic functions in \cite{Prep}. However, the doubling charts on the level hypersurfaces $Y$ constructed in Section \ref{Sec:Cov.Alg.Hyp} are nonlinear. So these charts cannot be applied to Harnack inequalities directly.

\subsection{$\gamma$-doubling ball covering of a punctured cube} \label{Sec:Zig.Cald}

In this section we construct a $\gamma$-doubling ball covering of a punctured cube, where $\gamma>1$ is the doubling factor. Our construction is inspired by the classical Whitney's covering lemma (\cite{Whi}). A similar construction appears also in Calder\'on-Zygmund decomposition (e.g. see \cite{St}). In fact, our construction works in the real space $\R^n$, and provides a covering with Euclidean balls. Notice that a connection between the geometry of a closed set and of its tabular neighborhoods and counting Whitney cubes in the complement is well known (see \cite{Har.Hur.Vah,Mar.Vuo,Kae.Leh.Vuo} and references therein). We provide here an explicit (non-asymptotic) counting of the Whitney cubes, covering a $\delta$-punctured unit cube, with the bound depending on the number of the deleted points, but not on their mutual position. We also describe explicitly the intersections of the corresponding covering balls.

\smallskip

Let $W\subset \R^n$ be an open domain, and let $G \subset W$ be a compact set. A $\gamma$-doubling ball covering $\U$ of $G$ in $W$ is a collection of balls $B_j\subset W$, which covers $G$ such that the concentric balls $\gamma B_j$ are contained in $W$.

\smallskip

In case when $\R^{2n}$ is the underlying real space of the complex space $\C^n$, any $\gamma$-doubling ball covering $\U$ is a complex $\gamma$-doubling covering, with the mappings $\psi_j$ being the linear scaling mappings of $B_1$ to $B_\gamma$.

\smallskip

Let $Q=[-1, 1]^n \subset \R^n$ be the $n$-dimensional unit cube, and let $z_1, \dots, z_d\in \R^n$. Denote by $U_\delta$ a $\delta$-neighborhood of $\{z_1, \dots, z_d\}$, and consider the domain $Q_\delta=Q\setminus U_\delta$, that is, we removed from $Q$ balls of radius $\delta>0$ around each point $z_1, \dots, z_d$.

\begin{thm} \label{thm:beta-doubling-covering}
Let $\gamma >1$. There is a $\gamma$-doubling ball covering $\U$ of $Q_\delta$ in $\R^n\setminus \{z_1, \dots, z_d\}$ with at most
$$
d(3\sqrt n\gamma)^n \log(3n\gamma/\delta)
$$
balls.

\smallskip

Moreover, for any $v, w$ belonging to the same connected component of $Q_\delta$, there exists a chain $Ch$ in $\U$, joining $v$ and $w$, such that for any two consequitive balls $B_{j_1}$ and $B_{j_2}$ in $Ch$ the ratio of the radii of these balls is either $\frac12$, $1$ or $2$, and the intersection $B_{j_1}\cap B_{j_2}$ contains a ball of the radius at least $1/3$ of the smaller of the radii.
\end{thm}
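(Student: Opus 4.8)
The plan is to build $\U$ from a Whitney-type dyadic decomposition adapted to the finite set $\{z_1,\dots,z_d\}$, with the doubling factor $\gamma$ hard-wired into the selection rule, and then to replace the selected cubes by slightly enlarged concentric balls. In the standard dyadic grid on $\R^n$ call a cube $Q$ \emph{admissible} if $\mathrm{side}(Q)\le 1$ and $\dist(Q,\{z_1,\dots,z_d\})>\gamma\,\mathrm{diam}(Q)$, and for each $x\notin\{z_i\}$ let $Q(x)$ be the largest admissible dyadic cube containing $x$ (which exists, by a standard argument, since every small enough such cube is admissible and admissibility passes to subcubes). The cubes $\{Q_j\}$ so obtained tile $\R^n\setminus\{z_i\}$; organized by size they are exactly a system of dyadic ``shells'' around the points $z_i$, a cube of side $2^{-k}$ lying at distance $\asymp\gamma\sqrt n\,2^{-k}$ from its nearest $z_i$. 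I then take $B_j=B(\mathrm{center}(Q_j),r_j)$ with $r_j=\mathrm{side}(Q_j)\max(1,\tfrac{\sqrt n}{2})$, so that $Q_j\subset B_j$; since $r_j\le\mathrm{diam}(Q_j)$, admissibility gives $\gamma r_j<\dist(\mathrm{center}(Q_j),\{z_i\})$, hence $z_i\notin\gamma B_j$ for all $i,j$. Retaining the finitely many $B_j$ that meet $Q_\delta$ yields a $\gamma$-doubling ball covering of $Q_\delta$ in $\R^n\setminus\{z_i\}$.

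To count $\#\U$: if $Q_j$ has side $2^{-k}$ and meets $Q_\delta$, then its dyadic parent is not admissible, so $\dist(\mathrm{parent}(Q_j),\{z_i\})\le 2\gamma\sqrt n\,2^{-k}$; since $Q_j$ also contains a point at distance $\ge\delta$ from all $z_i$ while $\mathrm{side}(Q_j)\le 1$, the scale is confined to $\delta/(2(\gamma+1)\sqrt n)\le 2^{-k}\le 1$, i.e.\ to $O(\log(n\gamma/\delta))$ values of $k$. For a fixed such $k$ and a fixed $z_i$, every selected cube of side $2^{-k}$ whose nearest bad point is $z_i$ lies in $B(z_i,(2\gamma+3)\sqrt n\,2^{-k})$ (again by non-admissibility of the parent), and a volume comparison bounds the number of grid cubes of side $2^{-k}$ inside such a ball by $(3\sqrt n\gamma)^n$ after the numerical constants are optimized. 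Summing over the $d$ points and the $O(\log(n\gamma/\delta))$ scales gives at most $d(3\sqrt n\gamma)^n\log(3n\gamma/\delta)$ balls.

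For the chain statement I would join $v$ and $w$ by a path inside their common connected component of $Q_\delta$ and read off the sequence $Q_{j_1},\dots,Q_{j_m}$ of selected cubes it crosses; consecutive cubes share a boundary point, so $B_{j_1},\dots,B_{j_m}$ is a chain in $\U$ through $v$ and $w$. That adjacent selected cubes have side ratio in $\{\tfrac{1}{2},1,2\}$ is built into the construction: within one shell all cubes have the same size, consecutive shells differ by a factor $2$, and the coarsest layer is matched in size to the adjacent shells — equivalently, one balances the dyadic decomposition, at the cost of only a dimensional constant in $\#\U$. Finally, for two adjacent selected cubes of sides $s\le s'\in\{s,2s\}$, the midpoint $p$ of the common face of the smaller cube lies in both cubes, with $|p-\mathrm{center}(Q)|=s/2$ and $|p-\mathrm{center}(Q')|\in\{s/2,\tfrac{s}{2}\sqrt{n+3}\}$; estimating $r-|p-\mathrm{center}|$ for the two enlarged radii shows that $B_{j_1}\cap B_{j_2}$ contains the ball about $p$ of radius $\ge\tfrac{1}{3}\min(r_{j_1},r_{j_2})$.

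The construction is classical in spirit; the work lies in the explicit bookkeeping. The main obstacle is to choose the enlargement factor of the circumscribed balls together with the numerical constants in the admissibility rule and the shell sizes so that \emph{simultaneously} (i) each $\gamma$-dilate $\gamma B_j$ misses every $z_i$, (ii) adjacent cubes differ in size by a factor at most $2$ (and not $4$), and (iii) neighbouring balls still overlap in a ball of relative radius $\tfrac{1}{3}$ — and then to compress the resulting volume and scale counts into the stated closed forms $(3\sqrt n\gamma)^n$ and $\log(3n\gamma/\delta)$.
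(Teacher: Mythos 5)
Your construction is a genuinely different route from the paper's. You use the classical \emph{maximal admissible dyadic cube} selection (as in Stein's treatment of the Whitney decomposition): a cube $Q$ is admissible iff $\dist(Q,\{z_i\})>\gamma\,\mathrm{diam}(Q)$, and each point is assigned its largest admissible cube. The paper instead builds the family level by level, explicitly maintaining a $k$-cube-wide buffer (with $k=\lceil(\sqrt n\gamma-1)/2\rceil$) between the ``finished'' cubes in $S_s$ and the cubes $\Sigma_s$ still to be subdivided, precisely so that adjacent cubes never differ by more than one level. Your selection in fact delivers that property for free, and more cleanly than you seem to believe: if $Q_1,Q_2$ are touching selected cubes of sides $s_1<s_2$, then inadmissibility of $Q_1$'s parent gives $\dist(p,\{z_i\})\le 2(\gamma+1)\sqrt n\,s_1$ for any $p\in Q_1\cap Q_2$, while admissibility of $Q_2$ gives $\dist(p,\{z_i\})>\gamma\sqrt n\,s_2$; combining, $s_2<2(1+1/\gamma)s_1<4s_1$, hence $s_2=2s_1$. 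So your hedge about ``balancing the dyadic decomposition at the cost of a dimensional constant'' is unnecessary for every $\gamma>1$ — you should drop it and just prove the two-line estimate. What \emph{is} genuinely looser in your write-up is the cube count: the paper's per-level bookkeeping gives an explicit $(2k+1)^n2^nd$ cubes at each level and telescopes immediately, whereas your volume comparison inside $B(z_i,(2\gamma+3)\sqrt n\,2^{-k})$ produces a constant like $\bigl((4\gamma+6)\sqrt n+1\bigr)^n$ which you assert can be ``optimized'' to $(3\sqrt n\gamma)^n$ without doing so; this deserves to be carried out (subtracting the inner hole $\gamma\sqrt n\,2^{-k}$ helps). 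Two smaller points: (i) your chain argument needs the path to be generic (avoiding faces of codimension $\ge 2$) so that consecutive cubes share an $(n-1)$-face — ``share a boundary point'' is too weak for the $1/3$-overlap estimate, and the paper makes this explicit; (ii) your ball radius $r_j=\mathrm{side}(Q_j)\max(1,\sqrt n/2)$ is a deliberate departure from the paper's circumscribed balls, which improves the low-dimensional overlap but must be (and is, since $r_j\le\mathrm{diam}(Q_j)$) re-checked against the doubling condition.
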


\begin{proof}
We construct the balls $B$ in the required $\gamma$-doubling covering $\U$ as the circumscribed balls of certain sub-cubes in binary subdivisions of $Q$ such that for any $B \in \U$
\begin{align} \label{doubling-cond}
\{z_1, \dots, z_d\} \cap \gamma B = \emptyset.
\end{align}

For $s=1, 2, \dots$ we call the closed sub-cubes $Q_s$, obtained by a subdivision of $Q$ into $2^{ns}$ equal parts, the level-$s$ sub-cubes, i.e. level-$s$ sub-cubes are derived by subdividing level-$(s-1)$ sub-cubes into $2^n$ parts. Since some of the points $\{z_1, \dots, z_d\}$ may be out of $Q$, we extend this subdivision to the entire space $\R^n$. We say that two sub-cubes $Q_l^q, Q_r^p$ are neighbors if $Q_l^q\cap Q_r^p \not=\emptyset$, and the $k$-neighborhood of a given level-$s$ sub-cube consists of all its neighbor level-$s$ sub-cubes up to ``distance'' $k$. Naturally, a $k$-neighborhood contains at most $(2k+1)^n$ level-$s$ sub-cubes (for any $s$). The length of an edge of a level-$s$ sub-cube is $\edge_s=2/2^s$, and the radius of the corresponding ball is $r_s=\sqrt{n\left(2/{2^s}\right)^2}/2=\sqrt n/2^s$. With these notation in hand, and in view of condition \eqref{doubling-cond}, we wrap each point $z_j$ with its $k$-neighborhood so that the following inequality holds
\be \label{inq-cond}
\gamma r_s \le k \cdot \edge_s + \edge_s/2
\ee
which is satisfied for
\be \label{eq:k}
k=\left\lceil\frac{\sqrt n\gamma-1}{2}\right\rceil.
\ee

In other words, assume that $z_j\in Q_s$ then for any circumscribed ball $B$ of a level-$s$ sub-cube outside the $k$-neighborhood of $Q_s$ we have $\gamma B \cap Q_s=\emptyset$.

\smallskip

Now, assume that in step $s$ the collections $S_1, \dots, S_s, \Sigma_s$ of sub-cubes inside $Q$ have been constructed, with the following properties:

\smallskip

1) $S_l$ consists of certain level-$l$ sub-cubes $Q_l$ inside $Q$, their number is at most $(2k+1)^n 2^n d$. Denote by $B_l$ the circumscribed ball of $Q_l\in S_l$, then $B_l$ satisfies condition \eqref{doubling-cond}, that is, $\{z_1, \dots, z_d\} \cap \gamma B_l=\emptyset$.

\smallskip

2) The sub-cubes of $S_l$ for $1\le l \le s-1$ may have neighbors only from $S_{l-1}, S_{l},S_{l+1}$, while sub-cube of $S_s$ may have neighbors from $S_{s-1},S_s, \Sigma_s$. Moreover, the ``$s$-distance'' between any sub-cube in $\Sigma_s$ to sub-cubes in $S_{s-1}$ is at least $k$.

\smallskip

3) $\Sigma_s$ consists of exactly those level-$s$ sub-cubes $Q_s$ inside $Q$, which either contain some points of $\{z_1, \dots, z_d\}$, or which are in (level-$s$) $k$-neighborhoods of certain $\tilde Q_s$ containing points $\{z_1, \dots, z_d\}$. The number of sub-cubes in $\Sigma_s$ is at most $(2k+1)^nd$. The sub-cubes in $\Sigma_s$ may have non-empty intersection only with level-$s$ sub-cubes of $S_s,\Sigma_s$.

\smallskip

4) The collections $S_1, \dots, S_s, \Sigma_s$ are disjoint and all their sub-cubes form a covering of $Q$.

\smallskip

{\it The induction step}. We proceed as follows, we subdivide each $Q_s \in \Sigma_s$ into $2^n$ equal sub-cubes $Q_{s+1}$. Altogether we get at most $(2k+1)^n 2^n d$ sub-squares.

\smallskip

Let $\Sigma_{s+1}$ be the union of those $Q_{s+1}$, which either contain some points of $\{z_1, \dots, z_d\}$, or which are in (level-$s$) $k$-neighborhoods of certain $\tilde Q_s$ containing points $\{z_1, \dots, z_d\}$. The number of sub-cubes in $\Sigma_{s+1}$ is at most $(2k+1)^n\cdot d$.

\smallskip

The sub-cubes in $\Sigma_{s+1}$ have non-empty intersection only with level-$(s+1)$ sub-cubes. Indeed, by property $3$ $\Sigma_s$ consists of all the level-$s$ sub-cubes $Q_s$ inside $Q$, which are in $s$-distance at most $k$ from the sub-cubes containing points $\{z_1, \dots, z_d\}$. After subdividing, the new level-$(s+1)$ $k$-neighborhood of $\{z_1, \dots, z_d\}$ is of $(s+1)$-distance at least $2\cdot k-k=k$ from any level-$s$ sub-cubes in $S_s$, and the in-between sub-cubes are of level-$(s+1)$ (these sub-cubes actually belong to $S_{s+1}$, as we shall see below). This proves, for $\Sigma_{s+1}$, property $3$ and the last part of property $2$.

\smallskip

Let $S_{s+1}$ be the union of the remaining $Q_{s+1}$, their number is at most $(2k+1)^n 2^n d$. Clearly, $S_{s+1},\Sigma_{s+1}$ are disjoint. Each $Q_{s+1}$ may have non-empty intersection with level-$s$ sub-cubes of $S_s$, and with level-$(s+1)$ sub-cubes of $S_{s+1},\Sigma_{s+1}$ (as we subdivide $\Sigma_s$ and it has neighbors from $S_s$). It also means that now, after subdivision, sub-cubes of $S_s$ may have non-empty intersection with sub-cubes of $S_{s+1}$. However, sub-cubes of $S_{s+1}$ cannot intersect sub-cubes of $S_{s-1}$. Indeed, they appear in subdivision of sub-cubes in $\Sigma_s$, which are at $s$-distance from $S_{s-1}$ at least $k$, by the last part of property $2$.

\smallskip

For each $Q_{s+1} \in S_{s+1}$ we build the circumscribed concentric ball $B_{s+1}$. By inequality \eqref{inq-cond} and the choice of $k$ (i.e. the construction of $\Sigma_{s+1}$) the concentric ball $\gamma B_{s+1}$ does not contain the points $z_1, \dots, z_d$. This completes the proof of properties $1$ and $2$.

\smallskip

We've subdivided only sub-cubes in $\Sigma_s$. So, the collections $S_1,\dots,S_s,S_{s+1}$, $\Sigma_{s+1}$ are disjoint, and their sub-cubes form a covering of $Q$. Note that it could be that $S_1,\dots,S_t$ are empty, for $t$ which satisfies $2^t \le 2k+1$. This completes the proof of property $4$ and the induction step.

\smallskip

Now, we complete the proof of Theorem \ref{thm:beta-doubling-covering}. By property $1$, each $S_l$ contains at most $(2k+1)^n2^nd$ sub-cubes of level $l$. Hence, the total number of sub-cubes in
$$
\mathcal{S}_s = S_1 \cup \dots \cup S_s
$$
is at most $s(2k+1)^n2^nd \le sd(3\sqrt n\gamma)^n$ (by the choice of $k$ in \eqref{eq:k}).

\smallskip

On the other hand, if $\Sigma_s\subset U_\delta$ then the process stops, and thus the collection of the circumscribed balls of the sub-cubes in $\mathcal{S}_s$ provides a $\gamma$-doubling covering $\U$ of $Q_\delta$. So, if the maximal possible distance $\frac12\sqrt{n\left((2k+1)\frac{2}{2^s}\right)^2}$ of points in $\Sigma_s$ from $\{z_1, \dots, z_d\}$ is equal to $\delta$, or $s = \log(3n\gamma/\delta)$ then $\Sigma_s$ is contained in $U_\delta$, and for this value of $s$ the process stops. Therefore, the total number of sub-cubes in $\mathcal{S}_s$ is at most
$$
d(3\sqrt n\gamma)^n \log(3n\gamma/\delta) .
$$

It remains to find, for any $v, w$ in the same connected component of $Q_\delta$, a chain $Ch$ in $\U$ joining $v$ and $w$, such that for any two neighbor balls $B_{j_1}$ and $B_{j_2}$ in $Ch$ the ratio of the radii of these balls is either $1$ or $2$, and the intersection $B_{j_1}\cap B_{j_2}$ contains a ball of radius at least $1/3$ of the smaller of the radii.

\smallskip

We construct a chain $Ch$ in $\U$ joining $v, w$ along a certain continuous path $\omega$ in $Q_\delta$. We can assume that $\omega$ intersects only with the interiors of the subdivision sub-cubes, and with their faces of dimension $n-1$, but does not touch faces of smaller dimensions. Since the sub-cubes in $\mathcal{S}_s$ form a covering of $Q_\delta$, following $\omega$, taking the subsequent sub-cubes, crossed by $\omega$, and omitting possible repetitions, we find a chain $\hat Ch$ of sub-cubes in $\mathcal{S}_s$ joining $v$ and $w$, in which any two neighboring sub-cubes not only have a non-empty intersection, but, in fact, intersect along a part of their common face of dimension $n-1$. Now, we define $Ch$ as the chain of the circumscribed balls for $\hat Ch$.

\smallskip

By property $1$ above the levels of the neighboring sub-cubes in $\hat Ch$ may differ at most by one. Hence, the ratio of the radii of the corresponding circumscribed balls is either $\frac12$, $1$ or $2$. Indeed, the ratio between two neighbor balls of level $l$ and $l+1$ is
$$
r_l/r_{l+1} = (\sqrt n/2^l)/(\sqrt n/2^{l+1})=2.
$$

Finally, an easy geometric calculation shows that in case when the neighboring sub-cubes in $\U$ intersect along a part of their common face of dimension $n-1$, the intersection of the circumscribed balls balls contains a ball of the radius at least $1/3$ of the smaller of the radii. Consider such sub-cubes of levels $s$ and $s+1$, the case of sub-cubes of the same level being completely similar. Now, the largest distance between the centers of two neighbor sub-cubes of level $s$ and $s+1$ with a common face is obtained (after putting in a standard position) for $Q_s$ with center at $A=(\frac12\frac{2}{2^s},\dots,\frac12\frac{2}{2^s})$, and $Q_{s+1}$ with center at $B=(\frac{2}{2^s}+\frac12\frac{2}{2^{s+1}},\dots,\frac12\frac{2}{2^{s+1}})$, where $Q_{s+1}$ is placed in a corner of an $(n-1)$-dimensional face of $Q_s$
\begin{align*}
\|A-B\| &= \sqrt{\left(\frac12\frac{2}{2^s}-\left(\frac{2}{2^s}+\frac12\frac{2}{2^{s+1}}\right)\right)^2+(n-1)\left(\frac12\frac{2}{2^s}-\frac12\frac{2}{2^{s+1}}\right)^2} \\
& = \frac{\sqrt{n+8}}{2^{s+1}} .
\end{align*}

The maximal radius $r$ of a ball, which can be placed inside the intersection of the corresponding circumscribed balls of $Q_s,Q_{s+1}$ is given by
$$
r = \frac12(r_s + r_{s+1}-|A-B|) = \frac{3\sqrt n-\sqrt{n+8}}{2^{s+2}} .
$$

Thus, the ratio of the radii of this ball and of a ball of level-$s+1$ is
$$
r/r_{s+1} = \frac{3\sqrt n-\sqrt{n+8}}{2^{s+2}} \frac{2^{s+1}}{\sqrt n} = \frac32-\sqrt{\frac14+\frac{2}{n}} \ge \frac13.
$$

This completes the proof of Theorem \ref{thm:beta-doubling-covering}.
\end{proof}

Note that the bound of Theorem \ref{thm:beta-doubling-covering} is sharp with respect to the parameters $d$, $\gamma$ and $\delta$, up to coefficients depending only on the dimension $n$. Consider the case of only one point $z_1=0\in \R^n$, and let $\U$ be a $\gamma$-doubling ball covering of $Q_\delta$ in $\R^n \setminus \{0\}$. Each ball $B$ in $\U$, centered at $z_0$ of radius $R$ satisfies $R\gamma < ||z_0||$. So to cover a spherical shell $\gamma R \le ||z|| \le \gamma R+1$ we need at least $C_1(n)\gamma^{n-1}$ balls in $\U$. Now, to ``reach'' the $\delta$-neighborhood of $0$ we need $\log (C_2(n)\gamma/\delta)$ concentric spherical shells as above. Finally, for several points $z_1,\ldots,z_d$, and for $\delta$ small enough, we can apply the above considerations to each point $z_j$ separately. Altogether we obtain a lower bound for $\kappa(\U)$ of the form $d C_1(n)\gamma^{n-1}\log (C_2(n)\gamma/\delta)$.

\smallskip

\noindent{\bf Remark 1.} {\it In the construction of the chain above it was not necessary to require the subsequent sub-cubes to have a common part of an $(n-1)$-face. Instead we could require them only to intersect by more than a vertex. This would just provide an absolute bound smaller than $1/3$ for the radius of the ball in the intersection.}

\smallskip

\noindent{\bf Remark 2.} {\it Theorem 3 of \cite{Vol.Kon} compares the multiplicities of a covering with certain balls, and with the twice larger concentric balls. It would be interesting to see implications of this result for Whitney-type doubling coverings.}

\section{Covering algebraic hypersurfaces} \label{Sec:Cov.Alg.Hyp}

Let $P(z)=\sum_{|\alpha|\le d}a_\alpha z^\alpha$ be a polynomial of degree $d$ on $\C^n$, with the usual multi-index notations: for $z=(z_1, \dots, z_n)\in \C^n$, and for $\alpha=(\alpha_1, \dots, \alpha_n)\in\N^n$, we have $|\alpha|=\sum_{i=1}^n \alpha_i$ and $z^\alpha=z_1^{\alpha_1} \cdots z_n^{\alpha_n}$. We denote the $\ell_1$-norm of $P$ by $\|P\|_1=\sum_{|\alpha|\le d}|a_\alpha|$.

\smallskip

The complex singular set $\Sigma=\Sigma(P)$ of the polynomial $P$ is defined by vanishing of all the partial derivatives $\frac{\partial P}{\partial z_j}$, $j=1, \dots, n$. For a generic $P$ its singular set consists of isolated and non-degenerate critical points: $\Sigma(P)=\{w_1, \dots, w_m\}$. By B\'ezout theorem $m\le (d-1)^n$, and a strict inequality may happen if some of singular points of $P$ are at infinity.

\smallskip

In what follows we always assume that the polynomial $P$ is normalized, i.e. $\|P\|_1=1$, and that {\it all the affine complex singular points $w_1, \dots, w_m$ of $P$ are isolated and non-degenerate}. In particular, this assumption implies, for $z\in Q$ (now $Q$ denotes the \textit{complex} $n$-dimensional unit cube), the following inequality
\be \label{eq:grad.P.dist}
K(P) \dist(z, \Sigma) \le \|\nabla P(z)\| \le n d^4 \dist(z, \Sigma)
\ee
where the gradient $\nabla P(z)=(\frac{\partial P(z)}{\partial z_1}, \dots, \frac{\partial P(z)}{\partial z_n})$, $K(P)>0$ is a constant depending on $P$, and $\|\cdot\|$ is the usual Euclidean norm of the gradients. The upper bound for $\|\nabla P(z)\|$ easily follows from Markov's inequality: the second partial derivatives of $P$ are bounded for $z\in Q$ by $n d^4 \|P\|_1=n d^4$. Integrating along the straight segment from $z$ to the nearest point in $\Sigma$ we obtain $\|\nabla P(z)\|\le n d^4 \dist(z, \Sigma)$. However, the bound from below for $\|\nabla P(z)\|$ of the form \eqref{eq:grad.P.dist} is valid only under our ``general position'' assumption.

\smallskip

Let us stress that the constant $K(P)$ in \eqref{eq:grad.P.dist} depends not only on the degree of the polynomial $P$, but {\it on its specific coefficients}. It can be bounded from below in terms of the minimal eigenvalues of the Hessians of $P$ at the critical points in $\Sigma(P)=\{w_1, \dots, w_m\}$. To simplify the presentation, we just take $K(P)$ as an explicit input parameter. Notice that by (\ref{eq:grad.P.dist}) we always have $K(P)\leq n d^4.$

\smallskip

We consider complex algebraic hypersurfaces $Y$, which are the level sets of $P$
$$
Y=\{P(z)=c\} \subset \C^n
$$
where $c$ is assumed to be a regular value of $P$. Thus, $Y$ is a nonsingular submanifold of dimension $d-1$ in $\C^n$.

\begin{thm} \label{thm:main.hypers.1}
Let $P(z)$ be a normalized polynomial on $\C^n$ of degree $d\ge2$, with isolated and non-degenerate critical points $\Sigma=\Sigma(P)=\{w_1, \dots, w_m\}$, so $P$ satisfies condition \eqref{eq:grad.P.dist} with $K=K(P)$.

\smallskip

Let $Y=\{P(z)=c\}$ be a regular level hypersurface of $P$. Denote $G=Y\cap Q$, and put $\delta=\dist(G, \Sigma(P))>0$. Then, there exists a doubling covering $\U$ of $G$ in $Y$ with $\rho({\U})\ge\frac{1}{10}$ and
$$
\kappa({\U})\le \frac{C_1(n, d)}{K^{2n}}\log(\frac{C_2(n, d)}{K\delta})
$$
where the constants $C_1(n, d), C_2(n, d)$ depend only on $n, d$.
\end{thm}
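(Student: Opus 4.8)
The plan is to reduce the covering problem on the hypersurface $Y$ to the ball-covering problem on a punctured cube, which is already solved by Theorem \ref{thm:beta-doubling-covering}, and to transfer the covering from the cube to $Y$ by means of a quantitative implicit function theorem. First I would observe that, away from $\Sigma$, the hypersurface $Y$ can be locally written as a graph over one of the coordinate hyperplanes: by \eqref{eq:grad.P.dist}, at a point $z\in G$ we have $\|\nabla P(z)\|\ge K\dist(z,\Sigma)\ge K\delta$, so some partial derivative, say $\partial P/\partial z_n$, has modulus at least $K\delta/\sqrt n$. The upper bound in \eqref{eq:grad.P.dist} controls $\|\nabla P\|$ and Markov's inequality controls the second derivatives (by $nd^4$), so a quantitative implicit function theorem yields that near $z$ the equation $P=c$ defines $z_n$ as an analytic function of $(z_1,\dots,z_{n-1})$ on a polydisc of radius comparable to $K\dist(z,\Sigma)/(nd^4)$, hence of radius $\gtrsim c(n,d)K\dist(z,\Sigma)$; moreover this graph parametrization extends analytically and univalently to a concentric polydisc of, say, four times the radius, provided one starts from a polydisc of half that size. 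This is the step I expect to be the main technical obstacle: one must make fully explicit how the domain of analyticity (and univalence) of the graph map degenerates as $\dist(z,\Sigma)\to 0$, keeping track of the dependence on $K$, $n$, $d$, and this is where the $K^{-2n}$ and $C_2(n,d)/(K\delta)$ bookkeeping enters.

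With the local graph structure in hand, the second step is to cover $G$ by charts whose sizes are proportional to the distance to $\Sigma$. I would project $G$ (or rather work in the source coordinates) and invoke the punctured-cube construction: scale so that the relevant coordinate projection of $G$ lands in a unit cube, and regard the images of the points $w_1,\dots,w_m$ (together with a small controlled enlargement reflecting that the graph radius is $\sim K\dist(z,\Sigma)$, not $\dist(z,\Sigma)$) as the deleted points $z_1,\dots,z_d$ in Theorem \ref{thm:beta-doubling-covering}, with $d\le m\le (d-1)^n$ and puncture radius $\sim K\delta$. Applying Theorem \ref{thm:beta-doubling-covering} with a fixed doubling factor $\gamma$ (chosen large enough — taking $\gamma$ an absolute constant like $8$ suffices, since we only need room to pass from $B_1$ to $B_4$ after rescaling) produces a $\gamma$-doubling ball covering of the punctured cube with at most $m\,(3\sqrt n\gamma)^n\log(3n\gamma/(cK\delta))$ balls, each ball $B$ sitting at distance $\gtrsim$ its radius from the punctures. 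Over each such ball the graph map is defined and extends to the $\gamma$-times concentric ball, because on that enlarged ball we are still at distance $\gtrsim cK\dist(\cdot,\Sigma)$ from $\Sigma$ — here is where the factor $K^n$ from the shrunken graph radius multiplies against the $(3\sqrt n\gamma)^n$ count to give the $K^{-2n}$ in the final bound (one factor $K^{-n}$ from rescaling the cube by $1/K$, and the loss is squared because the charts must be $\gamma$ times smaller than the naive graph size to guarantee extendability).

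The third step is to define the charts $\psi_j:B_1\to Y$ as the composition of an affine rescaling of $B_1$ onto the ball $B_j$ from the cube covering with the local graph parametrization over $B_j$; condition (2) of a doubling covering follows because $\gamma B_j$ still lies in the domain of analyticity and univalence of that graph map, by the choice of $\gamma$ and the distance estimate from Theorem \ref{thm:beta-doubling-covering}. The charts cover $G$ since the balls $B_j$ cover the projection of $G$ and the graphs cover $G$ over those balls. Finally, the lower bound $\rho(\U)\ge \tfrac1{10}$ on the intersection radii is inherited, up to the bi-Lipschitz distortion of the graph maps (which is bounded by an absolute constant once we have rescaled so that the first derivatives of the graph map are $O(1)$ — the derivative of $z_n$ with respect to $z_i$ is $-(\partial P/\partial z_i)/(\partial P/\partial z_n)$, which is bounded on the chosen polydisc), from the ``$1/3$ of the smaller radius'' intersection statement in Theorem \ref{thm:beta-doubling-covering}, after possibly passing to a uniform common radius for the charts; the constant $1/10$ is then a safe explicit choice absorbing these distortions. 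Collecting the count, $\kappa(\U)\le m\,(3\sqrt n\gamma)^n\,K^{-n}\log(C_2(n,d)/(K\delta))\le C_1(n,d)K^{-2n}\log(C_2(n,d)/(K\delta))$, which is the asserted estimate.
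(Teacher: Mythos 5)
Your high-level plan is the same as the paper's: combine the Whitney-type covering of the punctured cube (Theorem~\ref{thm:beta-doubling-covering}) with a quantitative implicit function theorem, and count charts. But there is a genuine gap in the step you flagged as the ``main technical obstacle'': the doubling factor $\gamma$ in the punctured-cube covering \emph{cannot} be an absolute constant like $8$; it must be taken of order $M/K$ with $M=nd^4$, i.e.\ $\gamma\sim nd^4/K$. Here is why. The quantitative IFT provides a graph chart at a point $z\in Y$ on a diskoball of radius $\theta\sim\|\nabla P(z)\|/(M\sqrt{n})\sim K\dist(z,\Sigma)/(M\sqrt{n})$. If $B^j$ is a Whitney ball of radius $R_j$ whose $\gamma$-concentric enlargement misses $\Sigma$, then $\dist(z,\Sigma)\gtrsim(\gamma-1)R_j$ for $z\in B^j$, so the admissible chart radius at such $z$ is $\gtrsim K(\gamma-1)R_j/(M\sqrt n)$. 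For that chart, shrunk by a factor $4$ for extendability, still to cover $B^j$, one needs $K(\gamma-1)/(M\sqrt n)\gtrsim\text{const}$, forcing $\gamma\gtrsim M/K+1$. With $\gamma$ fixed and $K$ small, the IFT charts over the Whitney balls are simply too small, and your covering fails condition (1). Your proposal to compensate by ``enlarging the deleted points'' does not help: you cannot enlarge the punctures beyond radius $\delta$ without deleting $G$, and enlarging them only makes the balls near $\Sigma_\delta$ smaller, which worsens matters. You would instead have to further subdivide each Whitney ball by a factor $\sim(M/K)^{2n}$, which is precisely the same $\gamma\sim1/K$ choice by another name.

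This also affects your bookkeeping of the $K^{-2n}$. In the paper it arises cleanly as $\gamma^{2n}$ in the Whitney count $m(3\sqrt{2n}\gamma)^{2n}\log(6n\gamma/\delta)$ (note the real dimension is $2n$, not $n$, since $\C^n=\R^{2n}$ — your exponent $(3\sqrt n\gamma)^n$ is for the wrong dimension). Your explanation, ``one factor $K^{-n}$ from rescaling the cube by $1/K$, and the loss is squared,'' does not correspond to any step that can be made precise: the cube is not rescaled, and the squared loss is not explained. With the correct $\gamma\sim M/K$ in real dimension $2n$, the count gives $\gamma^{2n}\sim(nd^4/K)^{2n}$ directly, which is the $C_1(n,d)/K^{2n}$ in the statement. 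Finally, the verification of $\rho(\U)\ge 1/10$ needs the IFT's quantitative distance-shortening estimate (the projection $\pi:Y\cap DB_\theta\to\bar B_\theta$ shortens distances by at most a factor $\approx 0.99$), which you gesture at but do not pin down; on its own that is a minor omission, but together with the $\gamma$ issue the proposal as written does not yield the theorem.
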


\begin{proof}
The main steps of the proof are as follows: as usual in differential topology, we produce doubling covering charts of $Y$ (which are special coordinate charts), using implicit function theorem. However, to count these charts, we need a ``quantitative'' version of this theorem, stated below. It produces, at a given point $z\in Y$, a coordinate chart of the size proportional to the norm of the gradient $\nabla P(z)$. So, near the critical points of $P$ we need more charts. By our assumptions the hypersurface $Y$ is at the distance at least $\delta$ from the critical set $\Sigma=\Sigma(P)$. Hence, it is contained in $Q_\delta=Q\setminus\Sigma_\delta$, where $\Sigma_\delta$ is the $\delta$-neighborhood of $\Sigma$. So, in order to control the construction explicitly, we apply Theorem \ref{thm:beta-doubling-covering}, with $\gamma$ of order $1/K$, and obtain a $\gamma$-doubling ball covering $\U$ of $Q_\delta$, with $C(n,d,\gamma)\log({c(n,d,\gamma)}/{\delta})$ charts. Because of the assumption \eqref{eq:grad.P.dist} on $P$, in each ball of $\U$ we get a lower bound on the norm of $\nabla P(z)$.

\smallskip

Now, we present the construction in detail. We use the following notations: $B_R=B^n_R$ is a complex ball of dimension $n$ and of radius $R$. Assuming that a coordinate system $z_1, \dots, z_n$ in $\C^n$ is fixed, we consider $\C^{n-1}\subset \C^n$ corresponding to the first $n-1$ coordinates $z_1, \dots, z_{n-1}$. For $z=(z_1, \dots, z_n)$ in $\C^n$ we denote $\bar z=(z_1, \dots, z_{n-1})$ its projection to $\C^{n-1}$, and we denote $\bar B_R=B^{n-1}_R$ a complex ball of dimension $n-1$ and of radius $R$. Finally, a ``diskoball'' $DB_R$ is a product $\bar B_R\times D_R$ of the ball of radius $R$ with respect to the first $n-1$ coordinates, and a disk of radius $R$ with respect to the last coordinate. We shall use a certain quantitative version of the standard implicit function theorem. Various settings of this result are known (see e.g. \cite{BM,Yom6}), however, they do not cover exactly the specific statements of the result below, so we provide a short proof.

\begin{thm} \label{thm:impl.fn}
Let $f(z_1, \dots, z_n)$ be a complex analytic function on an open (sufficiently large) domain $W\subset\C^n, 0\in W$. Assume that $f (0)=0$, $\frac{\partial f(0)}{\partial z_j}=0$ for $j=1, \dots, n-1$, while $|\frac{\partial f(0)}{\partial z_n}|=\eta >0$. Assume also that the supremum on $W$ of the absolute value of the second partial derivatives of $f$ does not exceed $M$. Put $\theta=\frac{\eta}{50M\sqrt {2n(n-1)}}$. Then, the following properties hold:

\smallskip

1) Inside the diskoball $DB_\theta$ centered at $0\in\C^n$ the set of zeroes of $f$, i.e. $Y=\{z: f(z)=0\}$, is a regular analytic hypersurface, which is the graph of a regular analytic function $z_n=\phi(z_1, \dots, z_{n-1})$, with $\|\nabla\phi\|\le\frac{1}{49}$ on $\bar B_\theta$.

\smallskip

2) Inside the diskoball $DB_\theta$ the hypersurface $Y$ is contained in a tabular neighborhood $W_\nu$ of the coordinate hyperplane $z_n=0$, of the size $\nu=\frac{\theta}{49}$. The projection $\pi: Y\to\C^{n-1}$, $\pi(z)=\bar z$, restricted to $Y\cap DB_\theta$, shortens distances at most as $1:\sqrt {1+(1/49)^2}\ge 0.99$.

\end{thm}

\begin{proof}
For each $z\in DB_\theta$, integrating the appropriate second derivatives of $f$ along the real segment $[0,z]$ (whose length does not exceed $\sqrt 2 \theta$) we get for $j=1, \dots, n-1$,
\be \label{eq:deriv.bd}
\left|\frac{\partial f(z)}{\partial z_j}\right| \le \sqrt {2n} M\theta=\frac{\eta}{50\sqrt {n-1}} ~, \quad \left|\frac{\partial f(z)}{\partial z_n}-\eta \right|\le \frac{\eta}{50\sqrt {n-1}}.
\ee

In particular, $|\frac{\partial f(z)}{\partial z_n}-\eta|\le \frac{\eta}{50}$, and hence $|\frac{\partial f(z)}{\partial z_n}|\ge \frac{49}{50}\eta$. Applying to $f$ the standard (local) implicit function theorem at each point $z\in Y\cap DB_\theta$, we conclude that there is a neighborhood $V\subset \bar B_\theta$ of $\bar z$ such that over $V$ the set $Y=\{f(z)=0\}$ is the graph of a regular analytic function $z_n=\phi(z_1, \dots, z_{n-1})$. It is easy to see that these local functions define in fact a unique regular analytic function $z_n=\phi(z_1, \dots, z_{n-1})$ over the entire ball $\bar B_\theta$. Indeed, on each line $L$ parallel to $Oz_n$ inside $DB_\theta$ and for any $z_n, z'_n \in L$ we have, via \eqref{eq:deriv.bd}
$$
f(z_1,\ldots,z_{n-1},z_n)-f(z_1,\ldots,z_{n-1},z'_n)\approx \eta(z_n-z'_n)
$$
and hence $f$ may have on $L$ at most one zero inside $DB_\theta$.

\smallskip

Next, by the chain rule and by \eqref{eq:deriv.bd} we have
$$
\left|\frac{\partial \phi(0)}{\partial z_j} \right|= \left|\frac{\partial f(z)}{\partial z_j}/\frac{\partial f(z)}{\partial z_n}\right| \le \frac{1}{49\sqrt {n-1}}~, \quad j=1,\ldots,n-1,
$$
and hence $||\nabla \phi(z)|| \le \frac{1}{49}$. For each $\bar z \in \bar B_\theta$ integrating along the real segment $[0,\bar z]$ we get $|z_n|=|\phi(\bar z)| \le \frac{1}{49}\theta$, and hence inside the diskoball $DB_\theta$ the hypersurface $Y$ is contained in a tabular neighborhood $W_\nu$ of the coordinate hyperplane $z_n=0$, of the size $\nu=\frac{\theta}{49}$. Finally, for $z_n'=\phi(\bar z')$ and $z_n''=\phi(\bar z'')$ integrating along the real segment $[\bar z',\bar z'']$ we get $|z_n'-z_n''| \le \frac{1}{49}||\bar z'-\bar z''||$. So the projection $\pi: Y\to \bar B_\theta$ shortens distances at most as $1:\sqrt {1+(1/49)^2}\ge 0.99$. This completes the proof of Theorem \ref{thm:impl.fn}.
\end{proof}

In order to use Theorem \ref{thm:impl.fn}, we recall that by Markov's inequality the second partial derivatives of $P$ are bounded for $z\in Q$ by $n d^4 \|P\|_1=n d^4$. So, we put $M=n d^4$. Now, we cover $Y$ with diskoballs $DB^j_{r_j}$, centered at $z_j\in Y$, whose radii $r_j$ satisfy the requirement $r_j\le{\|\nabla P (z_j)\|}/{50M\sqrt {2n(n-1)}}$ of Theorem \ref{thm:impl.fn}. To build $DB^j$ we first apply Theorem \ref{thm:beta-doubling-covering} to $Q_\delta=Q\setminus\Sigma_\delta$, with
$$
\gamma=\frac {600M\sqrt {2n(n-1)}}{K}+1=\frac {600nd^4\sqrt {2n(n-1)}}{K}+1.
$$

This theorem provides a $\gamma$-doubling ball covering $\U$ of $Q_\delta$ in $\C^n$, with $\kappa({\U}) \le m(3\sqrt {2n}\gamma)^{2n}\log(6n\gamma/\delta)$ (since by assumptions of Theorem \ref{thm:main.hypers.1} the number of critical points $w_j$ of $P$ in $\Sigma$ is $m \le (d-1)^n$, while the real dimension is $2n$). This yields
$$
\kappa({\U}) \le \frac{C_1(n,d)}{K^{2n}}\log(\frac{C_2(n, d)}{K\delta})
$$
where we can put, taking into account that $K\leq nd^4$, and after some simplifications, $C_1(n,d)=(4000n^2d^5)^{2n}$, $ C_2(n,d)=6000 n^3d^4$. This is the complexity bound required in Theorem \ref{thm:main.hypers.1}, so it remains to construct the required covering of $Y$ subordinated to the covering $\U$ of $Q_\delta$ in $\C^n$.

\begin{lem} \label{lem:grad.size}
Let $B^j\in {\U}$ be a ball of radius $R_j$, and let $z\in B^j$. Then, we have
$$
\|\nabla P(z)\|\ge 600\sqrt {2n(n-1)}MR_j.
$$
\end{lem}

\begin{proof}
Since, by definition of the $\gamma$-doubling ball covering $\U$ the concentric ball $\tilde B^j$ to $B^j$ of radius $\gamma R_j=(\frac {600M\sqrt {2n(n-1)}}{K}+1)R_j$ does not touch $\Sigma$, we conclude that for each $z\in B^j$ we have $\dist (z, \Sigma)\ge\frac {600M\sqrt {2n(n-1)}}{K}R_j$. By \eqref{eq:grad.P.dist} we obtain
$$
\|\nabla P(z)\|\ge K \dist(z, \Sigma) \ge 600M\sqrt {2n(n-1)}R_j.
$$
\end{proof}

Now, we proceed as follows, consider all the balls $B^j\in {\U}$, which intersect $Y$. For each $B^j$ we fix a point $z^j\in B^j\cap Y$. Applying a unitary coordinate transformation, we define a new coordinate system $(v_1, \dots, v_n)$ at $z^j$, such that the direction of the last coordinate axis $Ov_n$ coincides with the direction of $\nabla P(z^j)$.

\smallskip

Finally, we fix a diskoball $DB^j=DB^j_{r_j}$, with respect to $(v_1, \dots, v_n)$, centered at $z^j$, with $r_j=12R_j$. By Lemma \ref{lem:grad.size} we have
$$
\frac{\|\nabla P(z_j)\|}{50M\sqrt {2n(n-1)}}\ge\frac{600\sqrt {2n(n-1)} MR_j}{50M\sqrt {2n(n-1)}}=12R_j=r_j.
$$

So, the conditions of Theorem \ref{thm:impl.fn} are satisfied for the polynomial $P(z)-P(z_0)=P(z)-c$ on the diskoball $DB^j$. We conclude that $Y\cap DB^j_{r_j}$ is a graph of a regular analytic function $v_n=\phi_j(v_1, \dots, v_{n-1})$, such that $\|\nabla\phi_j(v_1, \dots, v_{n-1})\|\le\frac{1}{49}$ on the ball $\bar B^j_{r_j}$. Denote by $\tilde\phi_j$ the corresponding mapping of $\bar B^j_{r_j}$ to $Y$:
$$
\tilde\phi_j(v_1, \dots, v_{n-1})=(v_1, \dots, v_{n-1}, \phi_j(v_1, \dots, v_{n-1})).
$$

Finally, we apply a linear mapping $\lambda_j$ of the unit ball $\bar B_1$ to the concentric ball $\bar B^j_{r_j/4}$ of a four time smaller size, and define a chart $\psi_j$ in the covering ${\U}_Y$ of $Y$ under construction as $\psi_j=\tilde\phi_j\circ\lambda_j$. It remains to show that the images $U_j=\psi_j(B_1)$ of the charts $\psi_j$ form a doubling covering of $Y\cap Q$ with the required properties.

\smallskip

1) Clearly, $\psi_j$ are extendable from $\bar B_1$ to $\bar B_4$ and remain there univalent. Indeed, with $\lambda_j$ we shrink four times the domain, provided by the implicit function theorem.

\smallskip

2) The charts $U_j=\psi_j(B_1)$ form a covering of $Y\cap Q$. Indeed, put $\hat DB^j=DB^j_{r_j/4}=DB^j_{3R_j}$. We have $U_j=Y\cap\hat DB^j$. But the diskoball $\hat DB^j$ contains the ball $B^j_{2R_j}$, and already the balls $B^j_{R_j}$ of $\U$ cover $Q^n_\delta$. Since, by conditions $Y\subset Q_\delta$, we conclude that the diskoballs $\hat DB^j$, intersecting $Y$, and hence the charts $U_j=Y\cap\hat DB^j$, form a covering of $Y$.

\smallskip

3) For each two points $u_1, u_2$ belonging to the same connected component of $Y\cap Q$ there is a chain $Ch$ in ${\U}_Y$ joining $u_1$ and $u_2$, with $\rho(U_{j_1},U_{j_2})\ge 1/10$ for each couple of subsequent charts $U_{j_1},U_{j_2}$. Indeed, consider a curve $\omega$ joining $u_1$ and $u_2$ in $Y\cap Q$. As in the proof of Theorem \ref{thm:beta-doubling-covering} we can assume that $\omega$ does not touch faces of real dimension smaller than $2n-1$ in the sub-cubes constructed in the proof of Theorem \ref{thm:beta-doubling-covering}. (If necessary, we bring the coordinate system in $\C^n$ in general position with respect to $Y$). Marking the subsequent sub-cubes along $\omega$ and omitting repetitions, we obtain, taking the corresponding balls in $\U$, a chain of balls $B^{j_s}$ in ${\U}$, which covers $\omega$, such that for each couple of subsequent balls the intersection $Y\cap Q\cap B^{j_s}\cap B^{j^{s+1}}$ is non-empty, while the corresponding sub-cubes intersect along a common $(n-1)$-face. By the construction, for the chain $Ch$ of charts $U_{j_s}$ in ${\U}_Y$ with the same indices, each couple of subsequent charts $U_{j_s}$ has a non-empty intersection.

\smallskip

4) Consider now a couple of subsequent charts, say, $U_1, U_2$, in $Ch$. By our construction, for the corresponding balls $B^1$ and $B^2$ in $\U$ the intersection $Y\cap Q\cap B^1\cap B^2$ is non-empty. By Theorem \ref{thm:beta-doubling-covering}, the ratio of the radii $R_1$ and $R_2$ is $\frac{1}{2}$, $1$, or $2$, and their intersection contains a ball of radius at least $\frac{1}{3}$ of the smallest of $R_1$ and $R_2$. Now, we notice that the intersection of the charts $U_1, U_2$ on $Y$ contains the intersection $B^1_{2R_1}\cap B^2_{2R_2}\cap Y$ of the twice larger ball concentric to $B^1$ and $B_2$. On the other hand, by Theorem \ref{thm:impl.fn}, inside the diskoball $DB^1$ the hypersurface $Y$ is contained in a tabular neighborhood $W_\nu$ of the size $\nu=r_1/49$ of the coordinate hyperplane $v_n=0$. The inverse mapping $\psi_1^{-1}:U_1\to\hat B_1$ is just the projection to the first $n-1$ coordinates $v_1, \dots, v_{n-1}$, and by Theorem \ref{thm:impl.fn}, it shortens distances at most to a factor $0.99$. Now, an easy calculation, shows that $\rho(U_1, U_2)\ge\frac{1}{10}$.

This completes the proof of Theorem \ref{thm:main.hypers.1}.
\end{proof}

\section{Doubling inequalities on balls} \label{Sec:Doub.Ine}

\subsection{Doubling inequalities on concentric one-dimensional disks}

In this paper we work with algebraic functions. However, it is technically convenient to consider (in dimension one) a much larger class of $p$-valent functions. Let $p\in\N$, and let $f(z)$ be an analytic function in a domain $W\subset\C$. The function $f(z)$ is said to be $p$-valent in $W$ if the equation $f(z)=c$ has at most $p$ roots for any complex $c$. The study of $p$-valent functions is a classical topic in complex analysis (see \cite{Hay} and references therein).

\smallskip

The following theorem presents one of possible accurate formulations of the connection between $p$-valency and doubling inequalities, which is convenient for our purposes. For more general settings see e.g. \cite{RY, VdP}. One can get sharper constants replacing $p$-valent functions with $(s, p)$-valent ones, as defined in \cite{FY}, but we try to keep analytic tools to the minimum in this paper.

\begin{thm} \label{thm:Bier.RY.VdP}
Let $1>\alpha >\beta > 0$, and let $f(z)$ be $p$-valent in the disk $D$. Then, $f(z)$ satisfies a doubling inequality with respect to the disks $\beta D\subset\alpha D\subset D$ so that
$$
DC_f (\alpha D, \beta D)\le ((p+1)\alpha^p +{A'_p}/{(1-\alpha)^{2p+1}})/\beta^p =: c_p(\alpha, \beta)
$$
where $A'_p$ depends only on $p$.
\end{thm}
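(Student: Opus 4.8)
The plan is to reduce everything to a sharp statement about the Taylor coefficients of a $p$-valent function, via the classical Biernacki--type / area--type bounds for $p$-valent functions. First I would normalize: after a dilation we may assume $f$ is $p$-valent in the unit disk $D=D_1$, and by subtracting a constant (which changes neither $p$-valency nor the doubling ratio, since both maxima are maxima of $|f-f(0)|$ after the same shift... actually care is needed here: subtracting a constant does change $|f|$, so instead I would work directly with $M(r)=\max_{|z|=r}|f(z)|$ and keep track of $f(0)$). The key classical input is the Cartwright/Biernacki estimate: if $f(z)=\sum_{k\ge 0} c_k z^k$ is $p$-valent in $D_1$, then for every $k$ one has a bound of the shape $|c_k|\le A_p\, k^{2p-1} M(\alpha_0)$ for a fixed reference radius $\alpha_0$, or, in the form most convenient here, $M(r)\le (\text{const}_p)\,(1-r)^{-(2p+1)}\sup_{|z|\le 1/2}|f(z)|$ type inequalities. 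I would cite \cite{Hay} for the precise coefficient growth theorem for $p$-valent functions and extract from it exactly the constant $A'_p$ appearing in the statement.

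The main steps, in order, would be: (1) Recall the coefficient theorem for $p$-valent functions: there is $A_p$ with $|c_k|\le A_p k^{2p-1}\max_{|z|=\rho}|f(z)|$ for any convenient $\rho$ (say $\rho$ close to $1$), valid for $k\ge p$; the first $p$ coefficients are controlled separately and crudely by $\max_{|z|=\alpha}|f|$ on a slightly smaller circle via Cauchy estimates. (2) Estimate $\max_{\beta D}|f|$ from below by the dominant low-order part: write $f=f_{<p}+f_{\ge p}$, bound $\max_{\beta D}|f_{\ge p}|$ using the coefficient bounds and a geometric series in $\beta$, giving a term of size $\beta^p/(1-\beta)\cdot(\cdots)$ — but since we want a lower bound on $\max_{\beta D}|f|$ this direction is delicate, so instead (2') I would bound $\max_{\alpha D}|f|$ from \emph{above} in terms of the Taylor coefficients, and bound $\max_{\beta D}|f|$ from \emph{below} in terms of a single well-chosen coefficient $c_j$, $0\le j\le p$, namely the index realizing $\max_{0\le k\le p}|c_k|\beta^k$; then $\max_{\beta D}|f|\ge c_p(\alpha,\beta)^{-1}$-sized multiple of $|c_j|\beta^j$ after absorbing the tail. (3) Combine: $\max_{\alpha D}|f|\le \sum_{k<p}|c_k|\alpha^k+\sum_{k\ge p}|c_k|\alpha^k\le (p+1)\alpha^p\max_k(|c_k|\,\text{stuff})+A'_p(1-\alpha)^{-(2p+1)}\max_{|z|=\alpha}|f|$-type terms, and then divide by the lower bound for $\max_{\beta D}|f|$ of the form $\beta^p\cdot(\text{best coefficient})$. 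Matching the two gives exactly $DC_f(\alpha D,\beta D)\le ((p+1)\alpha^p+A'_p/(1-\alpha)^{2p+1})/\beta^p$.

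The step I expect to be the genuine obstacle is (2'): getting a clean \emph{lower} bound on $\max_{\beta D}|f|$ in terms of the Taylor data. The subtlety is that several low-order coefficients could partially cancel on the circle $|z|=\beta$, so one cannot simply say $\max_{\beta D}|f|\gtrsim |c_j|\beta^j$ for the largest $j\le p$. The standard fix is a Schwarz--lemma / divided-difference argument: the function $g(z)=f(z)$ viewed through its first $p+1$ Taylor coefficients; since $f$ is $p$-valent, $f$ minus any polynomial of degree $\le p$ cannot have too many zeros, which forces a quantitative non-degeneracy and yields $\max_{\beta D}|f|\ge c\,\beta^p\cdot(\text{sup of low coefficients})$ with $c$ depending only on $p$. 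Alternatively one runs the whole argument on $f^{(p)}$ or on an interpolation of $f$ at $p+1$ points in $\beta D$, which is univalent-free and gives the $1/\beta^p$ factor directly. I would present whichever of these routes gives the stated constants most transparently, and I would be careful that all constants that appear end up absorbed into a single $A'_p$ depending only on $p$, with the explicit $(p+1)\alpha^p$ and $(1-\alpha)^{-(2p+1)}$ factors surviving as written.
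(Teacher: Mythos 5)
Your overall outline is on the right track---the proof does proceed by splitting the Taylor series of $f$ at order $p$ and invoking Biernacki's coefficient theorem for $p$-valent functions to control the tail---but the ``genuine obstacle'' you flag at the end is a phantom, and the workarounds you propose for it (Schwarz-lemma / divided-difference arguments, passing to $f^{(p)}$, interpolation at $p+1$ points) are unnecessary. The observation you are missing is that the Cauchy coefficient estimate applied on the disk $\beta D$ already gives the desired relation between the Taylor data and $m := \max_{\beta D}|f|$, with no cancellation issues whatsoever. Indeed, for \emph{each} index $k$ separately one has
$$
|a_k| \;=\; \Bigl|\frac{1}{2\pi i}\int_{|z|=\beta}\frac{f(z)}{z^{k+1}}\,dz\Bigr| \;\le\; \frac{m}{\beta^k},
$$
so in particular $\max_{0\le i\le p}|a_i|\le m/\beta^p$ (using $\beta<1$). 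Your worry about ``several low-order coefficients partially cancelling on $|z|=\beta$'' would be relevant if one needed $m\gtrsim \sum_{j\le p}|c_j|\beta^j$, but nothing of the sort is required: Biernacki's theorem bounds $|a_k|$ for $k\ge p+1$ by the \emph{single} quantity $\max_{0\le i\le p}|a_i|$, and the Cauchy estimate above controls that single quantity by $m/\beta^p$. This is why the paper's proof is short: put $m=\max_{\beta D}|f|$, apply Cauchy on $\beta D$ to get $|a_k|\le m/\beta^k$ for all $k$, feed this into Biernacki to get $|a_k|\le A_p k^{2p-1}m/\beta^p$ for $k\ge p+1$, and then bound $\max_{\alpha D}|f|\le \sum_{k\ge 0}|a_k|\alpha^k$ directly, with $\sum_{k=0}^p(\alpha/\beta)^k\le (p+1)(\alpha/\beta)^p$ and the tail $\sum_{k>p}k^{2p-1}\alpha^k$ estimated by a polylogarithm bound of order $(1-\alpha)^{-(2p+1)}$. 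There is no lower-bound step to worry about.

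A secondary point: you state Biernacki's theorem in the form $|c_k|\le A_p\,k^{2p-1}\max_{|z|=\rho}|f|$, but the version actually used (and quoted in the paper) bounds $|a_k|$ in terms of $\max_{0\le i\le p}|a_i|$, not in terms of a sup-norm of $f$. The two are equivalent modulo the Cauchy estimate above, but keeping the coefficient form is what makes the factor $\beta^{-p}$ drop out cleanly, so I would encourage you to state it that way from the start rather than converting back and forth.
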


\begin{proof}
First, we recall the classical result of Biernacki \cite{Bie}:

\begin{prp} [Biernacki] \label{prp:Bier}
Let $f(z)=\sum_{k=0}^\infty a_k z^k$ be $p$-valent in the disk $D_1$. Then, for any $k\ge p+1$
$$
|a_k|\le A_p\max_{i=0, \dots, p} |a_i| k^{2p-1}
$$
where $A_p$ depends only on $p$.
\end{prp}

Via rescaling it is enough to consider only the unit disk $D=D_1$. Put $m=\max_{\beta D}|f|$. By Cauchy formula, applied to $\beta D$ for any $k$ we have $|a_k|\le{m}/{\beta^k}$. Hence, by Proposition \ref{prp:Bier} for $k\ge p+1$ we get
$$
|a_k|\le A_p k^{2p-1} m/\beta^p.
$$

Now, we obtain an upper bound for $|f|$ on $\alpha D$
\begin{align*}
\max_{\alpha D} |f(z)| &\le\sum_{k=0}^\infty |a_k|\alpha^k=\sum_{k=0}^p |a_k|\alpha^k+\sum_{k=p+1}^\infty |a_k|\alpha^k \\
&\le\sum_{k=0}^p\frac{m\alpha^k}{\beta^k}+\sum_{k=p+1}^\infty\frac{A_p k^{2p-1}m\alpha^k}{\beta^p} \\
&=\left(\frac{1-(\alpha/\beta)^{p+1}}{1-\alpha/\beta} +\frac{A_p}{\beta^p}\sum_{k=p+1}^\infty k^{2p-1}\alpha^k\right)\max_{\beta D} |f|.
\end{align*}

Now, we shall analyse the constant above. First, let us recall the polylog function $\operatorname{Li}_{s}(z)=\sum_{k=1}^\infty\frac{z^k}{k^s}$. Note that the infinite sum $\sum_{k=p+1}^\infty k^{2p-1}\alpha^k$ is in fact a tail of $\operatorname{Li}_{s}(z)$ with the parameters $s=1-2p$ and $z=\alpha$. In this case, when $s=-n$ for $n\in\N$, we have the following formula (e.g. see \cite{Mil})
$$
\operatorname{Li}_{-n}(z)=\frac{1}{(1-z)^{n+1}}\sum_{k=1}^n a_{n-1, k-1} z^k
$$
where the coefficients (aka Eulerian numbers) can be obtained by the recurrence equation $a_{n, k}=(n+1-k) a_{n-1, k-1}+ka_{n-1, k}$. Thus, for $s=1-2p$ and $z=\alpha$, we get
\begin{align*}
\frac{1-(\alpha/\beta)^{p+1}}{1-\alpha/\beta} & +\frac{A_p}{\beta^p}\sum_{k=p+1}^\infty k^{2p-1}\alpha^k\le \\
&\le\frac{1-(\alpha/\beta)^{p+1}}{1-\alpha/\beta} +\frac{A_p}{\beta^p(1-\alpha)^{2p}}\sum_{k=1}^{2p-1} a_{2p, k-1}\alpha^k \\
&\le\frac{1-(\alpha/\beta)^{p+1}}{1-\alpha/\beta} +\frac{A'_p}{\beta^p(1-\alpha)^{2p}}\sum_{k=1}^{2p-1}\alpha^k \\
&\le\frac{1-(\alpha/\beta)^{p+1}}{1-\alpha/\beta} +\frac{A'_p}{\beta^p(1-\alpha)^{2p}}\cdot\frac{1-\alpha^{2p}}{1-\alpha} \\
&\le(p+1)(\alpha/\beta)^p +\frac{A'_p}{\beta^p(1-\alpha)^{2p+1}}=c_p(\alpha, \beta)
\end{align*}
where $A'_p$ is another constant, which depends only on $p$.
\end{proof}

\subsection{Concentric higher-dimensional balls}

We consider analytic functions $f(z_1, \dots, z_n)$ of complex variables $z=(z_1, \dots, z_n)$. Let $W\subset\C^n$ be a domain. A function $f(z)$ analytic on $W$ is called sectionally $p$-valent, if it possesses the following property: for each straight line $L$ the restriction $f_L$ of $f$ to $L\cap W$ is $p$-valent. Algebraic functions $f$ of degree $d$ are sectionally $p(d)$-valent by the B\'ezout theorem, as well as other important classes of functions.

\begin{thm} \label{thm:Bier.RY.1}
Let $1>\alpha >\beta > 0$, and let $f(z)$ be sectionally $p$-valent in the ball $B\subset\C^n$. Then, $f(z)$ satisfies a doubling inequality with respect to the balls $\beta B\subset\alpha B\subset B$, with the doubling constant $DC_f(\alpha B, \beta B)\le c_p(\alpha, \beta)$.
\end{thm}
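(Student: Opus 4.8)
The plan is to reduce the $n$-dimensional statement to the one-dimensional Theorem~\ref{thm:Bier.RY.VdP} by slicing along complex lines through a suitably chosen point. First I would pick a point $z_0\in\beta B$ at which $|f|$ attains its maximum over $\overline{\beta B}$, so that $\max_{\beta B}|f|=|f(z_0)|=:m_\beta$, and pick a point $z_1\in \alpha B$ at which $|f|$ attains its maximum over $\overline{\alpha B}$, so that $\max_{\alpha B}|f|=|f(z_1)|=:m_\alpha$. The key observation is that the whole line $L$ through $z_0$ and $z_1$ intersects the three concentric balls $\beta B\subset \alpha B\subset B$ in three concentric disks, and these disks are \emph{nested} with the \emph{same} ratios: if we parametrize $L$ so that $L\cap B$ is a round disk $D$ of some radius $r\le \|B\|$ (the radius of $B$), then $L\cap \alpha B$ and $L\cap\beta B$ are concentric disks inside $D$. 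Here I must be a little careful: $L\cap\alpha B$ need not be exactly $\alpha(L\cap B)$ unless $L$ passes through the center of $B$, since a chord of a ball need not be centered at the ball's center. So the cleaner route is to translate coordinates so that $B$ is centered at $0$, and take $L$ to be the line through $z_0$ and $z_1$; then $L\cap B$, $L\cap\alpha B$, $L\cap\beta B$ are three disks, all concentric with each other (their common center being the foot of the perpendicular from $0$ to $L$), and an elementary computation shows that the radius of $L\cap\alpha B$ is at least $\alpha$ times the radius of $L\cap B$, and similarly the radius of $L\cap\beta B$ is at least... — wait, for the doubling inequality to go the right way I actually want $L\cap\beta B$ not too small and $L\cap\alpha B$ not too large relative to $L\cap B$, which is the wrong monotonicity for a chord.

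Because of that subtlety, the robust approach is the following: fix the line $L$ through $z_0$ and $z_1$, let $D=L\cap B$ with center $w$ and radius $r$, and note that $z_0\in L\cap\beta B$ and $z_1\in L\cap\alpha B$. Rescale the one-dimensional picture: apply Theorem~\ref{thm:Bier.RY.VdP} not to the triple $(\beta D', \alpha D', D')$ but directly to a triple of disks centered at $z_0$. Concretely, let $D_1$ be the largest disk centered at $z_0$ contained in $B\cap L$; its radius is $r-|z_0-w|\ge r-\beta\|B\|\ge (1-\beta)r\cdot(\text{const})$, and in any case the disk of radius $(1-\alpha)\|B\|$ centered at $z_0$ lies in $D=L\cap B$ while containing $z_1$ in its interior once one checks $|z_0-z_1|<(1-\alpha)\|B\|$ — hmm, that inequality also can fail. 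Let me instead state the right normalization: replace the disk $D$ by the disk $D^\star\subset D$ centered at $z_0$ of radius $\rho^\star:=r-|z_0-w|$. Then $z_0$ is the center of $D^\star$, the concentric disk $\alpha_1 D^\star$ with $\alpha_1:=|z_0-z_1|/\rho^\star$ contains $z_1$ on its boundary, and one checks $\alpha_1<1$ since $z_1\in\alpha B\subset B$ forces $|z_0-z_1|<\rho^\star$ (the point $z_1$ lies strictly inside $L\cap B$, which contains $D^\star$). Pick $\beta_1\in(0,\alpha_1)$, say $\beta_1=\alpha_1/2$; the restriction $f|_{D^\star}$ is $p$-valent, so Theorem~\ref{thm:Bier.RY.VdP} gives $|f(z_1)|\le \max_{\alpha_1 D^\star}|f|\le c_p(\alpha_1,\beta_1)\max_{\beta_1 D^\star}|f|\le c_p(\alpha_1,\beta_1)\,m_\beta$, provided $\beta_1 D^\star\subset\beta B$.

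So the remaining arithmetic is to choose the point $z_0$ and the radii so that $\beta_1 D^\star\subset\beta B$ and so that $c_p(\alpha_1,\beta_1)\le c_p(\alpha,\beta)$. Since $c_p(\cdot,\cdot)$ is monotone (increasing in the first argument, decreasing in the second), it suffices to guarantee $\alpha_1\le\alpha$ and $\beta_1\ge\beta$; and $\beta_1 D^\star\subset\beta B$ holds as soon as the radius $\beta_1\rho^\star$ plus $\mathrm{dist}(z_0, \text{center of }B)$ stays below $\beta\|B\|$. All of this is arranged by rescaling: without loss of generality $B=B_1$ is the unit ball, $\beta B=\beta B_1$, and $z_0$ is a maximum point of $|f|$ on $\overline{\beta B_1}$, hence $|z_0|\le\beta$; then $\rho^\star=1-|z_0|\ge 1-\beta$, while $|z_0-z_1|\le |z_0|+|z_1|\le \beta+\alpha$, which is too crude — the honest fix is to observe that we are free to instead slice through the center: take $L$ to be the line through $z_1$ and the center $0$ of $B$. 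Then $D:=L\cap B_1$ \emph{is} centered at $0$ with radius $1$, $L\cap\alpha B_1=\alpha D$ and $L\cap\beta B_1=\beta D$ exactly, $f|_D$ is $p$-valent, $z_1\in\alpha D$, so $|f(z_1)|\le\max_{\alpha D}|f|\le c_p(\alpha,\beta)\max_{\beta D}|f|\le c_p(\alpha,\beta)\max_{\beta B_1}|f|$, which is exactly the claim. I expect the main (only) obstacle to be precisely this bookkeeping: verifying that slicing through the center of $B$ turns the nested balls into nested \emph{concentric} disks with the same ratios $\alpha,\beta$, so that the one-dimensional result applies verbatim with no loss in the constant; once that is seen, the proof is immediate, and the displayed reduction $DC_f(\alpha B,\beta B)=|f(z_1)|/\max_{\beta B}|f|\le c_p(\alpha,\beta)$ follows.
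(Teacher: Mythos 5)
Your final argument---slicing through the \emph{center} of $B$ and the point where $|f|$ attains its maximum on $\alpha B$, so that the three concentric balls cut $L$ in three concentric disks with the same ratios $\alpha,\beta$, then applying Theorem~\ref{thm:Bier.RY.VdP} to $f|_L$---is exactly the paper's proof. The earlier detours (slicing through the maximizer on $\beta B$, or through two off-center points, and then repairing the non-concentric chords) were unnecessary and correctly abandoned; the clean version you landed on at the end is the intended argument and is complete.
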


\begin{proof}
Let $z\in\alpha B$. Consider the complex straight line $L$ passing through the points $0$ and $z$, and let $f_L$ be the restriction of $f$ to $L$.
%
%
Now, applying Theorem \ref{thm:Bier.RY.VdP} to $f_L$ with $\beta B\cap L\subset\alpha B\cap L\subset B\cap L$, we obtain the required inequality for $f_L(z)=f(z)$
$$
|f(z)| \le\max_{\alpha B\cap L}|f_L|\le DC_f(\alpha B\cap L, \beta B\cap L)\max_{\beta B\cap L}|f_L|\le c_p(\alpha, \beta)\max_{\beta B} |f|.
$$
\end{proof}

\subsection{Doubling inequalities on non-concentric balls}

Now, we extend the doubling inequality for sectionally $p$-valent functions, provided by Theorem \ref{thm:Bier.RY.1} to couples of non-concentric balls.

\begin{cor} \label{cor:non-conc}
Let $f(z)$ be sectionally $p$-valent in the ball $B_{4}$, and let $B_1$ be the concentric ball. Let $\Delta_\rho\subset B_1$ be a ball of radius $\rho$ in $B_1$, not necessarily concentric to it. Then,
\begin{align*}
DC_f(B_1, \Delta_\rho)\le c_p/\rho^p
\end{align*}
where $c_p>0$ depends only on $p$.
\end{cor}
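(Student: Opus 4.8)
The plan is to reduce the statement about the non-concentric pair $\Delta_\rho \subset B_1$ to the concentric situation already handled by Theorem \ref{thm:Bier.RY.1}, by inserting one intermediate concentric ball around the center of $\Delta_\rho$. First I would let $a$ denote the center of $\Delta_\rho$, and observe that since $a\in B_1$ and $B_4$ is the concentric ball of radius $4$, the ball $B(a,\rho/\rho')$ of a suitable fixed radius around $a$ is still comfortably inside $B_4$; more concretely, the ball $B(a,3)$ centered at $a$ is contained in $B_4$ (as $\|a\|<1$), and it contains $B_1$. So we have the chain of inclusions $\Delta_\rho = B(a,\rho) \subset B(a,3) \subset B_4$, with $B(a,\rho)$ and $B(a,3)$ genuinely concentric.

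Next I would apply Theorem \ref{thm:Bier.RY.1} to $f$ on the ball $B(a,3)$, which is legitimate because $f$ is sectionally $p$-valent on $B_4 \supset B(a,3)$, and sectional $p$-valency is inherited by sub-balls (the restriction to any line is $p$-valent on the whole line's intersection with $B_4$, hence a fortiori on the smaller intersection). Taking $\beta B(a,3) = B(a,\rho)$, i.e. $\beta = \rho/3$, and $\alpha$ any fixed value in $(\beta,1)$ — for instance $\alpha = 1/2$ once $\rho$ is small, or more uniformly one simply uses $\alpha B(a,3) \supset B_1$ which holds since $B_1 \subset B(a,2) = \tfrac23 B(a,3)$, so $\alpha = 2/3$ works for every admissible $\rho<2$. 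This gives
\begin{align*}
DC_f(B_1,\Delta_\rho) \le DC_f\bigl(\tfrac23 B(a,3),\ \tfrac{\rho}{3}B(a,3)\bigr) \le c_p(2/3,\rho/3).
\end{align*}

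Finally I would unwind the explicit form of $c_p(\alpha,\beta)$ from Theorem \ref{thm:Bier.RY.VdP}: with $\alpha = 2/3$ fixed the factor $(p+1)\alpha^p + A'_p/(1-\alpha)^{2p+1}$ is a constant depending only on $p$, and dividing by $\beta^p = (\rho/3)^p$ produces a bound of the shape $\mathrm{const}(p)\cdot 3^p/\rho^p$, i.e. $c_p/\rho^p$ as claimed. I do not expect any real obstacle here: the only point requiring a little care is to pick the intermediate radius (here $3$) so that the larger concentric ball simultaneously stays inside $B_4$ for every center $a\in B_1$ and still engulfs $B_1$, which is exactly why the factor $4$ in the definition of a doubling covering is convenient — the slack between radius $1$ and radius $4$ leaves room for a radius-$3$ ball centered anywhere in $B_1$.
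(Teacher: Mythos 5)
Your proof is correct, and it is a clean, slightly more economical variant of what the paper does. The paper's argument takes two bites: it first applies Theorem~\ref{thm:Bier.RY.1} to the balls $\Delta_\rho \subset \Delta_{\hat\rho/2} \subset \Delta_{\hat\rho}$ concentric with $\Delta_\rho$ (where $\Delta_{\hat\rho}$ is the largest ball concentric to $\Delta_\rho$ that fits in $B_4$, so $3<\hat\rho<4$), then notices $B_{1/2}\subset\Delta_{\hat\rho/2}$ and applies the theorem a second time to the concentric triple $B_{1/2}\subset B_1\subset B_4$, finally multiplying the two doubling constants. You instead choose the intermediate ball large enough that it already swallows $B_1$: since $a\in B_1$, the ball $B(a,2)=\tfrac23 B(a,3)$ contains $B_1$ while $B(a,3)\subset B_4$, so a single application of Theorem~\ref{thm:Bier.RY.1} to $\Delta_\rho\subset B(a,2)\subset B(a,3)$ with $\alpha=2/3$, $\beta=\rho/3$ suffices, and the required inequality $\beta<\alpha$ holds because $\rho\le 1<2$. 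The one point worth making explicit (which you implicitly used) is the comparison $DC_f(B_1,\Delta_\rho)\le DC_f(B(a,2),\Delta_\rho)$, which is immediate from $B_1\subset B(a,2)$ and $\max_{\Delta_\rho}|f|$ appearing in both denominators. Both approaches yield a constant of the form $\mathrm{const}(p)/\rho^p$; yours avoids the second invocation of the theorem and the auxiliary ball $B_{1/2}$, at the cost of nothing — it is the tidier of the two.
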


\begin{proof}
Let $\Delta_{\hat\rho}\subset B_4$ be the maximal sub-ball of $B_4$ concentric to $\Delta_\rho$. We have $3<\hat\rho<4$. Consider now the ball
$\Delta_{\hat\rho/2}$ concentric to $\Delta_\rho$. By Theorem \ref{thm:Bier.RY.1}, applied to the concentric balls
$\Delta_\rho\subset\Delta_{\hat\rho/2}\subset\Delta_{\hat\rho}$ we get
$$
\max_{\Delta_{\hat\rho/2}} |f|\le c_p(\hat\rho/2\hat\rho, \rho/\hat\rho)\max_{\Delta_{\rho}} |f|.
$$

Now, notice that $\Delta_{\hat\rho/2}$ contains the disk $B_{1/2}$ concentric to $B_1$. 
Hence, $\max_{B_{1/2}} |f|\le\max_{\Delta_{\hat\rho/2}} |f|$. Once more, By Theorem \ref{thm:Bier.RY.1}, applied to the concentric disks $B_{1/2}\subset B_1\subset B_4$, we conclude that
$$
\max_{B_1} |f|\le c_p (1/4, 1/8)\max_{B_{1/2}} |f|.
$$

Taking these two inequalities into account yields
$$
DC_f(B_1, \Delta_\rho)\le c_p(1/4, 1/8) c_p(1/2, \rho/\hat\rho) .
$$

Finally, simply observe that the constant $c_p(1/4, 1/8) c_p(1/2, \rho/\hat\rho)$ can be written as $c_p/\rho^p$, where $c_p>0$ depends only on $p$.
\end{proof}

\section{Doubling inequalities on complex manifolds} \label{Sec:Gen.Doub.Ine}

In this section we give a very general form of a doubling inequality for analytic functions $f$ on a manifold $Y$ that are sectionally $p$-valent with respect to a certain fixed doubling covering $\U$ of $Y$. We do so via continuation along chains of charts in $\U$.

An analytic function $f$ on $Y$ is called sectionally $p$-valent with respect to the doubling covering $\U$ of $Y$ if for each chart $U_j$ in $\U$ the function $f_j=f\circ\psi_j$ is sectionally $p$-valent in $B_4$. Certainly, polynomials or algebraic functions on algebraic manifolds $Y$ satisfy this property for each covering $\U$ with algebraic charts, with $p$ depending only on the degrees of the algebraic objects involved.

\begin{thm} \label{thm:uniform-doubling-covering}
Let $Y$ be a complex manifold, $\Omega\subset G$ be compact domains in $Y$, and $z\in G$. Let $f$ be an analytic function in a neighborhood of $G$ in $Y$, and let $\U$ be a doubling covering of $G$ in $Y$ such that $f$ is sectionally $p$-valent with respect to $\U$. Then, we have
$$
|f(z)|\le K(z, \Omega, f)\max_{\Omega}|f|
$$
where
$$
K(z, \Omega, f)=\inf_{Ch\in CH(z, \Omega, {\U})}\frac{c_p^{\ell(Ch)}}{\rho(U_{j_1}, \Omega)^p\prod_{m=1}^{\ell(Ch)-1}\rho(U_{j_m}, U_{j_{m+1}})^p}
$$
and $c_p>0$ being the constant from Corollary \ref{cor:non-conc}.
\end{thm}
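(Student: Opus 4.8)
The plan is to fix a single admissible chain $Ch=\{j_1,\dots,j_l\}\in CH(z,\Omega,{\U})$, propagate an upper bound for $\max|f|$ from $\Omega$ along the successive charts $U_{j_1},\dots,U_{j_l}$, losing exactly a factor $c_p/\rho^p$ at each step (where $\rho$ is the relevant intersection radius), and then take the infimum over all such chains. If $CH(z,\Omega,{\U})$ is empty the right-hand side is $+\infty$ and there is nothing to prove, so assume it is nonempty.

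First I would handle the initial chart. By the definition of $\rho(U_{j_1},\Omega)$, the preimage $\psi_{j_1}^{-1}(U_{j_1}\cap\Omega)\subset B_1$ contains a ball $\Delta$ of radius $\rho(U_{j_1},\Omega)$. Since $f_{j_1}=f\circ\psi_{j_1}$ is sectionally $p$-valent in $B_4$, Corollary \ref{cor:non-conc} applies to the pair $\Delta\subset B_1\subset B_4$ and gives
\[
\max_{U_{j_1}}|f|=\max_{B_1}|f_{j_1}|\le\frac{c_p}{\rho(U_{j_1},\Omega)^p}\max_{\Delta}|f_{j_1}|\le\frac{c_p}{\rho(U_{j_1},\Omega)^p}\max_{\Omega}|f|,
\]
the last inequality because $\psi_{j_1}(\Delta)\subset U_{j_1}\cap\Omega\subset\Omega$, and the first equality because $\psi_{j_1}$ is univalent with image $U_{j_1}$.

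Next comes the induction step. Suppose $\max_{U_{j_m}}|f|\le M_m$ has been established. Since $U_{j_m},U_{j_{m+1}}$ are neighboring, $U_{j_m}\cap U_{j_{m+1}}\neq\emptyset$, and by the definition of the intersection radius the set $\psi_{j_{m+1}}^{-1}(U_{j_m}\cap U_{j_{m+1}})\subset B_1$ contains a ball $\Delta'$ of radius $\rho(U_{j_m},U_{j_{m+1}})$. On $\Delta'$ one has $|f_{j_{m+1}}|=|f\circ\psi_{j_{m+1}}|\le\max_{U_{j_m}\cap U_{j_{m+1}}}|f|\le M_m$ because $U_{j_m}\cap U_{j_{m+1}}\subset U_{j_m}$. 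Applying Corollary \ref{cor:non-conc} once more, to the sectionally $p$-valent $f_{j_{m+1}}$ on $\Delta'\subset B_1\subset B_4$, yields
\[
\max_{U_{j_{m+1}}}|f|=\max_{B_1}|f_{j_{m+1}}|\le\frac{c_p}{\rho(U_{j_m},U_{j_{m+1}})^p}\,M_m=:M_{m+1}.
\]
Iterating this from $m=1$ to $m=l-1$, starting from the bound of the previous paragraph, and using $z\in U_{j_l}$, gives
\[
|f(z)|\le\max_{U_{j_l}}|f|\le\frac{c_p^{\,\ell(Ch)}}{\rho(U_{j_1},\Omega)^p\prod_{m=1}^{\ell(Ch)-1}\rho(U_{j_m},U_{j_{m+1}})^p}\,\max_{\Omega}|f|.
\]
Taking the infimum over $Ch\in CH(z,\Omega,{\U})$ gives the assertion.

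I do not expect a genuine obstacle: the argument is a bookkeeping induction. The points needing care are (i) that the hypothesis ``$f$ sectionally $p$-valent with respect to ${\U}$'', i.e. each $f_j$ sectionally $p$-valent on $B_4$, is precisely what Corollary \ref{cor:non-conc} consumes --- this is exactly why the definition of a doubling covering requires extendibility to $B_4$; (ii) that it is the intersection radius $\rho(U_{j_m},U_{j_{m+1}})$ (with respect to the chart $\psi_{j_{m+1}}$ we are passing \emph{to}) that controls the sub-ball $\Delta'$ used in the step; and (iii) the trivial boundary case in which no admissible chain exists.
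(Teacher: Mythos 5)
Your argument is correct and matches the paper's proof essentially step for step: both propagate a bound from $\Omega$ along the chain by applying Corollary~\ref{cor:non-conc} once per chart, using the sub-ball of radius $\rho(U_{j_1},\Omega)$ for the first step and the sub-balls of radius $\rho(U_{j_m},U_{j_{m+1}})$ inside $\psi_{j_{m+1}}^{-1}(U_{j_m}\cap U_{j_{m+1}})$ for the subsequent steps, and then take the infimum over chains. The only cosmetic difference is that you spell out the $\Omega$-to-$U_{j_1}$ step explicitly whereas the paper presents the inductive step first and notes that the initial step is the same verbatim.
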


\begin{proof}
By the assumptions, for each chart $U_j$ of $\U$ the function $f_j=f\circ\psi_j$ is sectionally $p$-valent in $B_4$. Let $Ch=\{j_1, \dots, j_n\}$ be a chain in $CH(z, \Omega, \U)$. By renaming the indices we may assume that $Ch=\{1, \dots, n\}$. By the definition of $\rho(U_{j}, U_{j+1})$, there is a subball $\Delta_{\rho(U_{j}, U_{j+1})}$ of radius $\rho(U_{j}, U_{j+1})$, such that
$$
\Delta_{\rho(U_{j}, U_{j+1})}\subset\psi^{-1}_{j+1}(U_{j}\cap U_{j+1})\subset B_1.
$$

Thus, by the definition of $f_{j+1}$ we have
$$
\max_{\Delta_{\rho(U_{j}, U_{j+1})}}|f_{j+1}|\le\max_{\psi^{-1}_{j+1}(U_{j}\cap U_{j+1})}|f_{j+1}|=\max_{U_{j}\cap U_{j+1}} |f|\le\max_{U_{j}}|f|.
$$

Now, applying Corollary \ref{cor:non-conc} to $f_{j+1}$, we have
$$
\max_{B_1}|f_{j+1}|\le\frac{c_p}{\rho(U_j, U_{j+1})^p}\max_{\Delta_{\rho(U_{j}, U_{j+1})}}|f_{j+1}|.
$$

Thus, by combining these two inequalities, we conclude
$$
\max_{U_{j+1}} |f|=\max_{B_1}|f_{j+1}|\le\frac{c_p}{\rho(U_j, U_{j+1})^p}\max_{U_{j}}|f|.
$$

This allows us to pass from one chart to the next along the chain. Verbally repeating this calculation, as we pass from $\Omega$ to $U_1$ in the chain, we get for each chain $Ch\in CH(z, \Omega, {\U})$
$$
|f(z)|\le\max_{U_{n}}|f|\le\frac{c_p^{\ell(Ch)}}{\rho(U_{1}, \Omega)^p\prod_{m=1}^{\ell(Ch)-1}\rho(U_{m}, U_{{m+1}})^p}\max_{\Omega}|f|.
$$

Taking infimum over all the chains in $CH(z, \Omega, {\U})$ completes the proof of Theorem \ref{thm:uniform-doubling-covering}.
\end{proof}

Let us give a weaker, but more simple version of Theorem \ref{thm:uniform-doubling-covering}. We assume that $\Omega\subset G\subset Y$, and $f$ as before, and fix a certain doubling covering $\U$ of $G$ in $Y$, such that $f$ is sectionally $p$-valent with respect to $\U$.

Let us make the following assumption on $\U$: there are constants $\ell(\U) \le \kappa(\U)$, and $\rho(\U)>0$, such that any two points in the same connected component of $G$ can be joined by a chain $Ch$ in $\U$ of the length $\ell(Ch) \le \ell(\U)$, with any two subsequent charts $U_i, U_j$ in $Ch$ satisfying $\rho(U_i, U_j) \ge \rho(\U)$. This condition is satisfied in our main results below.
%
%
%
Assuming in addition that $\rho(\U, \Omega), \rho(\U)\ge\rho$, we have the following simple and natural corollary of Theorem \ref{thm:uniform-doubling-covering}.

\begin{cor} \label{cor:simple}
Let $f$ be an analytic function in $Y$. Let $\U$ be a doubling covering, such that $f$ is sectionally $p$-valent with respect to $\U$. Assume that $\rho(\U, \Omega), \rho(\U)\ge\rho$. Then, we have
$$
DC_f(G, \Omega) \le\left({c_p}/{\rho^p}\right)^{\ell(\U)} \le\left({c_p}/{\rho^p}\right)^{\kappa(\U)} .
$$
\end{cor}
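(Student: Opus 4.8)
The plan is to derive Corollary \ref{cor:simple} directly from Theorem \ref{thm:uniform-doubling-covering} by estimating the quantity $K(z,\Omega,f)$ uniformly in $z\in G$, using the structural hypotheses we have placed on $\U$. Since $DC_f(G,\Omega)=\max_{z\in G}|f(z)|/\max_\Omega|f|$, it suffices to bound $K(z,\Omega,f)$ for every $z\in G$ by $(c_p/\rho^p)^{\ell(\U)}$, and then the second inequality is immediate from $\ell(\U)\le\kappa(\U)$ together with $c_p/\rho^p\ge 1$ (which we may assume, since otherwise there is nothing to prove).

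First I would fix $z\in G$. If $z\in\Omega$ the bound is trivial (the ratio is $\le c_p/\rho^p$), so assume $z\notin\Omega$; pick any point $w$ in $\Omega$ lying in the same connected component of $G$ as $z$ — here one uses that $\Omega$ meets every component of interest, or simply runs the argument componentwise. By the standing assumption on $\U$, there is a chain $Ch=\{j_1,\dots,j_\ell\}$ joining $w$ and $z$ with $\ell=\ell(Ch)\le\ell(\U)$ and $\rho(U_{j_m},U_{j_{m+1}})\ge\rho(\U)\ge\rho$ for all $m$. One checks that this chain lies in $CH(z,\Omega,\U)$: indeed $z\in U_{j_\ell}$ by construction, and $\rho(U_{j_1},\Omega)\ge\rho(\U,\Omega)\ge\rho>0$ because $w\in U_{j_1}\cap\Omega$ forces $U_{j_1}$ to genuinely meet $\Omega$, so the preimage $\psi_{j_1}^{-1}(U_{j_1}\cap\Omega)$ contains a ball of radius at least $\rho(\U,\Omega)$.

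Next I would plug this specific chain into the infimum defining $K(z,\Omega,f)$. Since every factor $\rho(U_{j_1},\Omega)$ and $\rho(U_{j_m},U_{j_{m+1}})$ in the denominator is $\ge\rho$, and there are at most $\ell(Ch)$ such factors in total (one from $\Omega$ and $\ell(Ch)-1$ from the internal links), we get
$$
K(z,\Omega,f)\le\frac{c_p^{\ell(Ch)}}{\rho^{p}\prod_{m=1}^{\ell(Ch)-1}\rho^{p}}=\left(\frac{c_p}{\rho^p}\right)^{\ell(Ch)}\le\left(\frac{c_p}{\rho^p}\right)^{\ell(\U)},
$$
where the last step uses $\ell(Ch)\le\ell(\U)$ and $c_p/\rho^p\ge1$ so that raising the exponent only increases the bound. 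Theorem \ref{thm:uniform-doubling-covering} then gives $|f(z)|\le(c_p/\rho^p)^{\ell(\U)}\max_\Omega|f|$; taking the supremum over $z\in G$ yields the first inequality of the corollary, and $\ell(\U)\le\kappa(\U)$ (again with $c_p/\rho^p\ge1$) gives the second.

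The only mild subtlety — the "main obstacle," such as it is — is bookkeeping the number of denominator factors and the monotonicity argument $c_p/\rho^p\ge1$: one must be careful that $c_p$, coming from Corollary \ref{cor:non-conc}, is indeed $\ge1$ (it is, being a doubling constant, hence $\ge1$) and that $\rho\le1$ (true, since $\rho$ is a radius of a subball of the unit ball $B_1$), so that the base $c_p/\rho^p$ exceeds $1$ and inflating the exponent from $\ell(Ch)$ up to $\ell(\U)$ and then to $\kappa(\U)$ is legitimate. Everything else is a direct substitution into Theorem \ref{thm:uniform-doubling-covering}.
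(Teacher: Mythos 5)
Your proposal is correct and matches what the paper intends: the corollary is stated in the paper without a written proof precisely because it is the direct substitution into Theorem \ref{thm:uniform-doubling-covering} that you carry out. You pick a chain of length $\le\ell(\U)$ guaranteed by the standing assumption on $\U$, observe that each of the $\ell(Ch)$ factors in the denominator of $K(z,\Omega,f)$ is bounded below by $\rho$, and conclude $K(z,\Omega,f)\le(c_p/\rho^p)^{\ell(Ch)}\le(c_p/\rho^p)^{\ell(\U)}\le(c_p/\rho^p)^{\kappa(\U)}$, using $c_p/\rho^p\ge 1$ to inflate the exponent. Your attention to the monotonicity condition $c_p/\rho^p\ge 1$ (justified since $\rho\le 1$ and $c_p$ absorbs the constant $c_p(1/4,1/8)\,c_p(1/2,\rho/\hat\rho)\cdot\rho^p$, which is $\ge 1$) and to the fact that the chain constructed from $w\in\Omega$ to $z$ does lie in $CH(z,\Omega,\U)$ is exactly the bookkeeping the paper leaves to the reader.
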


Finally, we use Corollary \ref{cor:simple} to reverse the inequality, obtaining a lower bound on the number of charts in doubling coverings in terms of the doubling constant for certain functions.

\begin{cor} \label{cor:reverse}
Let $f$ be an analytic function in $Y$. Let $\U$ be a doubling covering, such that $f$ is sectionally $p$-valent with respect to $\U$. Assume that $\rho(\U, \Omega), \rho(\U)\ge\rho$. Then, we have
$$
\kappa(\U)\ge\frac{\log DC_f(G, \Omega)}{\log(c_p/\rho^p)}.
$$
\end{cor}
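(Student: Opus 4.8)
The plan is to derive Corollary \ref{cor:reverse} as a direct algebraic consequence of Corollary \ref{cor:simple}, which already does all the analytic work. First I would observe that by Corollary \ref{cor:simple}, under the standing assumptions $\rho(\U, \Omega), \rho(\U)\ge\rho$ and sectional $p$-valency of $f$ with respect to $\U$, we have the inequality
$$
DC_f(G, \Omega) \le \left(c_p/\rho^p\right)^{\kappa(\U)}.
$$

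Next I would take logarithms of both sides. Since $0<\rho$ (and we may assume $\rho$ small enough, or $p$ large enough, that $c_p/\rho^p>1$; otherwise the doubling constant is at most $1$ and there is nothing of interest to prove), the quantity $\log(c_p/\rho^p)$ is positive, so taking $\log$ preserves the inequality and gives
$$
\log DC_f(G, \Omega) \le \kappa(\U)\log\left(c_p/\rho^p\right).
$$

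Finally, dividing both sides by the positive quantity $\log(c_p/\rho^p)$ yields
$$
\kappa(\U) \ge \frac{\log DC_f(G, \Omega)}{\log(c_p/\rho^p)},
$$
which is exactly the claimed bound. I do not expect any genuine obstacle here: the only points requiring a word of care are that $\log(c_p/\rho^p)$ must be positive in order for the division to preserve the direction of the inequality, and that $\ell(\U)\le\kappa(\U)$ so that the second inequality in Corollary \ref{cor:simple} is the one actually used. Both are already built into the hypotheses and the construction of the doubling covering, so the proof is essentially a two-line manipulation of Corollary \ref{cor:simple}.
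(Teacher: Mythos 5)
Your proof is correct and matches the paper's intent exactly: the paper states no separate proof for Corollary \ref{cor:reverse}, merely remarking that it is obtained by ``reversing the inequality'' of Corollary \ref{cor:simple}, which is precisely the logarithm-and-rearrange manipulation you carry out. Your added care about the sign of $\log(c_p/\rho^p)$ is a reasonable point that the paper leaves implicit.
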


\subsection{Doubling inequality for polynomials on $Y$} \label{Sec:doub.on.Y}

As an immediate consequence of Theorem \ref{thm:main.hypers.1} we obtain an explicit bound in a doubling inequality for polynomials $S$ of degree $d_1$ on hypersurfaces $Y$. Let $P(z)$, $Y$, $\Sigma=\Sigma(P)$, $G=Y\cap Q$, $\delta=\dist(G, \Sigma(P))>0$ be as above, and let ${\U}_Y$ be the doubling covering of $G$ in $Y$ constructed in Theorem \ref{thm:main.hypers.1}. Let $\Omega\subset G$ be a compact sub-domain of $G$. To simplify the presentation we shall assume that $\rho(\U, \Omega)\ge\frac{1}{10}$.

\begin{cor} \label{cor:simple1}
Let $Y, G, \Omega$ be as above. Let $f$ be a restriction of a polynomial $S$ of degree $d_1$ to $Y$. Then, we have
$$
DC_f(G, \Omega)\le({C_2(n, d)}/{K\delta})^{C_3(n, d, d_1)/K^{2n}}.
$$
\end{cor}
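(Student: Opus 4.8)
The plan is to combine the complexity bound from Theorem \ref{thm:main.hypers.1} with the general chain-based doubling inequality of Corollary \ref{cor:simple}. First I would observe that since $S$ is a polynomial of degree $d_1$, its restriction $f=S|_Y$ is sectionally $p$-valent with respect to the covering $\U_Y$ constructed in Theorem \ref{thm:main.hypers.1}: each chart map $\psi_j$ is (up to the linear rescaling $\lambda_j$) the graph map $\tilde\phi_j$ of the analytic function $\phi_j$, and along any complex line in $B_4$ the composition $f\circ\psi_j$ agrees with $S$ restricted to the image curve, which is an algebraic curve of degree bounded in terms of $d$, $d_1$, $n$; hence by B\'ezout the number of preimages of any value is bounded by some $p=p(n,d,d_1)$. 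This is the only place where the degree $d_1$ of $S$ enters.

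Next I would invoke the structural properties of $\U_Y$ established in the proof of Theorem \ref{thm:main.hypers.1}: it is a doubling covering of $G$ in $Y$ with $\rho(\U_Y)\ge\frac{1}{10}$ (property 3--4 there), and, by the stated assumption, $\rho(\U_Y,\Omega)\ge\frac{1}{10}$ as well. Moreover the chain joining any two points of a connected component of $G$ has length at most $\kappa(\U_Y)$, so we may take $\ell(\U_Y)=\kappa(\U_Y)$ in the hypotheses of Corollary \ref{cor:simple} with $\rho=\frac{1}{10}$. Applying Corollary \ref{cor:simple} then gives
$$
DC_f(G,\Omega)\le\left(c_p/\rho^p\right)^{\kappa(\U_Y)}=\left(10^p c_p\right)^{\kappa(\U_Y)},
$$
where $c_p$ is the constant from Corollary \ref{cor:non-conc}; both $p$ and $c_p$ depend only on $n$, $d$, $d_1$.

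Finally I would substitute the explicit bound on $\kappa(\U_Y)$ from Theorem \ref{thm:main.hypers.1}, namely $\kappa(\U_Y)\le\frac{C_1(n,d)}{K^{2n}}\log\!\big(\frac{C_2(n,d)}{K\delta}\big)$. Writing $10^p c_p=e^{A}$ for a constant $A=A(n,d,d_1)$, we obtain
$$
DC_f(G,\Omega)\le\exp\!\left(A\cdot\frac{C_1(n,d)}{K^{2n}}\log\frac{C_2(n,d)}{K\delta}\right)=\left(\frac{C_2(n,d)}{K\delta}\right)^{A C_1(n,d)/K^{2n}},
$$
and setting $C_3(n,d,d_1)=A(n,d,d_1)\,C_1(n,d)$ yields exactly the claimed inequality $DC_f(G,\Omega)\le(C_2(n,d)/K\delta)^{C_3(n,d,d_1)/K^{2n}}$. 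The only genuinely non-routine point is the first step --- verifying sectional $p$-valency of $S|_Y$ in the charts, i.e. bounding the degree of the algebraic curves cut out on $Y$ by lines in the coordinate diskoballs and applying B\'ezout; everything afterwards is a direct concatenation of the two previously proved results, so I expect no serious obstacle beyond bookkeeping of constants.
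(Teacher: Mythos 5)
Your proposal is correct and follows essentially the same route as the paper: verify sectional $p$-valency of $S|_Y$ via a B\'ezout count in the diskoball charts, then combine Corollary \ref{cor:simple} with the complexity bound $\kappa(\U)\le (C_1/K^{2n})\log(C_2/(K\delta))$ from Theorem \ref{thm:main.hypers.1} and $\rho\ge\tfrac{1}{10}$. The only (inconsequential) difference is that the paper makes the B\'ezout step fully explicit --- the solutions of $S\circ\psi_j=h$ on a line $L$ correspond to common zeros of $n-2$ affine forms together with $P=c$ and $S=h$, giving the sharp count $p=dd_1$ --- whereas you leave $p$ as an unspecified $p(n,d,d_1)$.
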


\begin{proof}
By Theorem \ref{thm:main.hypers.1} we have $\kappa ({\U})\le (C_1(n, d)/K^{2n})\log({C_2(n, d)}/{K\delta})$. We also have, by Theorem \ref{thm:main.hypers.1}, $\rho=\min(\rho({\U}, \Omega), \rho({\U}))\ge\frac{1}{10}$. Thus, in order to apply Corollary \ref{cor:simple}, we need to study the
valency $p$ of the restrictions of $S\circ\psi_j$ to the straight lines $L$ in $\bar B_1$, for a polynomial $S$ of degree $d_1$ on $Y$.

\smallskip

So, we have to bound the number of solutions of $S\circ\psi_j=h$ on such lines. Let $L$ be defined in the subspace $\C^{n-1}\subset\C^n$ by the affine equations $l_i=0$, $i=1, \dots, n-2$, which we extend to $\C^n$ via the projection $\pi$. The solutions of $S\circ\psi_j=h$ on $L$ are in a one-to-one correspondence with the points, defined in $\C^n$ by the system of $n$ equations
$$
l_i=0~, i=1, \dots, n-2~, P=c~, S=h
$$
of degrees $1, d, d_1$, respectively. By B\'ezout theorem, this number is at most $dd_1$. So, we set $\rho=\frac{1}{10}$, $p=dd_1$ in Corollary \ref{cor:simple}, and obtain
$$
DC_f(G, \Omega)\le(10^p c_p)^{(C_1/K^{2n})\log({C_2}/{K\delta})}=({C_2}/{K\delta})^{C_3/K^{2n}}
$$
where $C_3=C_3(n, d,d_1)=\log(10^p c_p) C_1(n, d)$, with $p=dd_1$.
\end{proof}

\section{Concluding remarks} \label{Sec:examples}

\subsection{The hyperbola $H_\e=\{zy=\e^2\}$ in $\C^2$, and similar curves}

The example of hyperbola $H_\e=\{xy=\e^2\}$ in $\C^2$ plays a prominent role in study of smooth parameterizations. Already $C^k$-parameterization of the real hyperbola $Q^{real}_\e=H_\e\cap I^2$, for $k\ge 2$ is a nontrivial question. Finding an exact number of charts in $C^2$-parameterization of $H^{real}_\e$ was suggested as an exercise in \cite{Gro}. This exercise was partially completed in \cite{Yom4}, where also some initial results on the complexity of analytic parameterizations of $H^{real}_\e$ were obtained. How many mild charts (with fixed parameters) do we need to cover $H^{real}_\e$, as $\e$ tends to zero, is an open question. Application of Theorem \ref{thm:main.hypers.1}, and of Corollary \ref{cor:reverse} provide the following result:

\begin{thm}
There is a doubling covering $\U$ of $G_\e=H_\e\cap Q$ in $H_\e$ such that $\rho(\U)\ge\frac{1}{10}$ and $\kappa(\U)\le c_1\log({c_2}/{\e})$, where $c_1, c_2$ are absolute constants. For any doubling covering $\tilde {\U}$ of $G_\e$ with $\rho(\tilde {\U})\ge\frac{1}{10}$, we have $\kappa(\tilde {\U})\ge c_3\log({1}/{\e})$.
\end{thm}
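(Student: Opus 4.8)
The upper bound is almost immediate from Theorem \ref{thm:main.hypers.1}. The plan is to take $P(z_1,z_2) = z_1 z_2$, which after normalization (dividing by $\|P\|_1$) has a single isolated non-degenerate critical point at the origin, with Hessian of determinant $-1$; hence the constant $K = K(P)$ in \eqref{eq:grad.P.dist} is an absolute constant. The level set $\{P = \e^2\}$ (or rather $\{P = c\}$ for the normalized $c$) is exactly $H_\e$, and $\delta = \dist(G_\e,\Sigma(P))$ is comparable to $\e$ up to an absolute constant, since the point of $H_\e$ closest to the origin is $(\e,\e)$ (in suitable normalization) at distance $\sqrt 2\,\e$. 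Plugging $n = 2$, $d = 2$, and these absolute values of $K$ and $\delta \asymp \e$ into the bound $\kappa(\U)\le \frac{C_1(n,d)}{K^{2n}}\log\!\big(\frac{C_2(n,d)}{K\delta}\big)$ of Theorem \ref{thm:main.hypers.1} gives $\kappa(\U)\le c_1\log(c_2/\e)$ with $c_1,c_2$ absolute, and the same theorem gives $\rho(\U)\ge\frac1{10}$. So the first half of the statement requires only bookkeeping of constants.

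For the lower bound I would invoke Corollary \ref{cor:reverse}: for any analytic $f$ on $H_\e$ that is sectionally $p$-valent with respect to a doubling covering $\tilde\U$ with $\rho(\tilde\U,\Omega),\rho(\tilde\U)\ge\rho$, one has $\kappa(\tilde\U)\ge \log DC_f(G,\Omega)\big/\log(c_p/\rho^p)$. So the task reduces to (i) choosing a suitable $\Omega\subset G_\e$, and (ii) exhibiting a polynomial $S$ (restricted to $H_\e$) of bounded degree whose doubling constant $DC_S(G_\e,\Omega)$ grows like a positive power of $1/\e$. The natural choice is $S(z_1,z_2) = z_1$ (or $z_2$): on $H_\e = \{z_1 z_2 = \e^2\}$ parametrized by $t\mapsto (t,\e^2/t)$, the cube $Q$ forces $|t|$ and $|\e^2/t|$ both at most $1$, so $t$ ranges over the annulus $\e^2 \le |t|\le 1$, on which $|z_1| = |t|$ attains values from $\asymp\e^2$ up to $\asymp 1$. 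Taking $\Omega$ to be the part of $G_\e$ where $|t|$ is of order $1$ (a fixed compact piece with $\rho(\tilde\U,\Omega)$ bounded below), one gets $\max_G |S| \asymp 1$ while $\max_\Omega |S|$ is bounded but $\max_G |1/S|$, or more to the point a doubling ratio between the $|t|\asymp 1$ region and the $|t|\asymp\e^2$ region, is of order $\e^{-2}$. Since $z_1$ restricted to $H_\e$ is sectionally $p$-valent with $p$ an absolute constant (by the Bézout argument of Corollary \ref{cor:simple1}, here with $d = d_1 = 2$, so $p\le 4$), Corollary \ref{cor:reverse} yields $\kappa(\tilde\U)\ge c_3\log(1/\e)$.

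The main subtlety is arranging step (ii) so that the doubling constant genuinely witnesses the full range $\e^2 \le |t| \le 1$ \emph{along a single chain}, i.e. making sure the chosen $\Omega$ and the region where $|S|$ is small both lie in the same connected component of $G_\e$ and that $\rho(\tilde\U,\Omega)$ can be taken $\ge\frac1{10}$ (or at least bounded below by an absolute constant, which is all Corollary \ref{cor:reverse} needs after adjusting $c_3$). One must also check that $G_\e = H_\e\cap Q$ is connected (it is, being the image of the annulus $\e^2\le|t|\le 1$ under a homeomorphism onto its image, once $Q$ is the closed unit polydisc), so that the hypothesis ``same connected component'' in the definition of $CH(z,\Omega,\tilde\U)$ is satisfied and a joining chain exists. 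The polynomial estimate itself — that $|z_1|$ on $H_\e$ really does drop by a factor $\asymp\e^{-2}$ between $\Omega$ and a point near the pinch — is an elementary one-variable computation on the annulus and should not cause trouble. The phrase ``and similar curves'' suggests the same recipe applies verbatim to $\{z_1^a z_2^b = \e^{a+b}\}$ and analogous plane curves, with $p$ and the exponents adjusted accordingly.
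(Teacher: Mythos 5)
Your upper-bound argument is exactly the paper's: normalize $P$, note that $\nabla P = (y,z)$ has norm equal to the Euclidean distance to the origin (so $K(P)$ is absolute), compute $\delta = \dist(H_\e\cap Q,\{0\}) = \sqrt 2\,\e$, and plug $n=d=2$ into Theorem~\ref{thm:main.hypers.1}. No issues there.

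For the lower bound, your overall strategy (Corollary~\ref{cor:reverse} applied to a low-degree polynomial restricted to $H_\e$, with $p$-valency controlled by B\'ezout as in Corollary~\ref{cor:simple1}) is again the paper's, but your concrete choice of $S$ and $\Omega$ is inconsistent and, as written, gives nothing. You take $S = z_1 = t$ and $\Omega$ to be the region where $|t| \asymp 1$; but then $\max_\Omega|S| \asymp 1 \asymp \max_G|S|$, so $DC_S(G,\Omega) \asymp 1$. Your hedged sentence (``or more to the point a doubling ratio between the $|t|\asymp 1$ region and the $|t|\asymp\e^2$ region \ldots'') shows you sense the problem, but computing $\max_G|1/S|$ is not the same as computing $DC_S(G,\Omega)$, and the proposal never resolves the mismatch. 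To make the argument work you must pair $S$ with an $\Omega$ on which $|S|$ is \emph{small}: either keep $\Omega=\{|z_1|\ge\tfrac12\}\cap G$ and switch to $S = z_2 = y$ (the paper's choice, giving $\max_\Omega|y|=2\e^2$ and $DC_y(G,\Omega)=\tfrac{1}{2\e^2}$), or keep $S = z_1$ and instead take $\Omega = \{|z_2|\ge\tfrac12\}\cap G$, where $|z_1|\le 2\e^2$. Either fix is a one-line change and then Corollary~\ref{cor:reverse} gives $\kappa(\tilde\U)\ge c_3\log(1/\e)$. One further small slip: with $S$ linear one has $d_1 = 1$, so the B\'ezout bound is $p = dd_1 = 2$, not $4$; this does not affect the shape of the conclusion.

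The remaining caveats you raise (connectedness of $G_\e$, and the need to have $\rho(\tilde\U,\Omega)$ as well as $\rho(\tilde\U)$ bounded below for Corollary~\ref{cor:reverse} to apply) are real and are also implicitly assumed in the paper's proof; flagging them is appropriate, and they are satisfied for the concrete $\Omega$ above.
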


\begin{proof}
The polynomial $P(z, y)=zy-\e^2$, defining $H_\e$, has the only singular point at the origin $0\in\C^2$, and the norm of $\nabla P(z, y)=(y, z)$ is exactly the distance $|z|^2+|y|^2$ from the point $(z, y)$ to the origin. So, $K=K(P)=1$. Now, the distance of $H_\e$ to the origin is equal to $\sqrt 2\e$, and it is achieved along the ``vanishing cycle'' $z=\e e^{i\theta}, y=\e e^{-i\theta}$. Application of Theorem \ref{thm:main.hypers.1} provides the required doubling covering $\U$ of $G_\e$, with $\kappa(\U)\le c_1\log({c_2}/{\e})$, where $c_1, c_2$ are absolute constants.

\smallskip

Consider now a linear polynomial $y$ restricted to $H_e$. Its maximal absolute value on $G$ is one. Put $\Omega=\{|z|\ge\frac{1}{2}\}\cap G$. We have $\max_\Omega |y|=2\e^2$. Therefore, we get $DC_y(G, \Omega)=\frac{1}{2\e^2}$. By Corollary \ref{cor:reverse}, we conclude that in any doubling covering $\tilde {\U}$ of $G$ in $H_\e$, with $\rho(\tilde {\U}), \rho({\U}, \Omega)\ge\frac{1}{10}$ the number of charts $\kappa({\U})$ is at least $c_3\log({1}/{\e})$, with $c_3$ an absolute constant.
\end{proof}

In the same way we can work with more general polynomials $P(z, y)$ representable as products of regular factors. In particular, consider a polynomial
$$
P(z, y)=z(z-1)\cdots(z-d)y(y-1)\cdots(y-d)
$$
of degree $2d+2$. This polynomial has exactly $(d-1)^2$ isolated non-degenerate singular points. Proceeding as above, we construct a doubling covering of the curve $Y_\e=\{P(z, y)=\e^2\}$ in a cube $Q_{d+1}$ of size $d+1$, with an order of $(d-1)^2\log({1}/{\e})$ charts, and show that for any doubling covering the number of charts must be of the same order.

\subsection{Higher-dimensional quadrics}

Let
$$
P(z)=P(z_1, \dots, z_n)=\sum_{j=1}^n z_j^2.
$$

As for the hyperbola, $P$ has the only singular point at the origin $0\in\C^n$, and the norm of $\nabla P(z)=(2z_1, \dots, 2z_n)$ is exactly twice the distance from the point $z$ to the origin. So, $K=K(P)=2$. Consider
$$
Y_\e=\{P(z)=\e^2\}.
$$

The distance of $Y_\e$ to the origin is equal to $\e$, and it is achieved at the real points of the form $(0, 0, \dots, \pm\e, 0, \dots0)$. Indeed, for any point $(z_1, \dots, z_n)\in Y_\e$ we have
$$
\|z\|^2 = \sum_{j=1}^n |z_j|^2\ge |\sum_{j=1}^n z_j^2|=\e^2.
$$

As above, Theorem \ref{thm:main.hypers.1} produces a doubling covering $\U$ of $G=H_\e\cap Q$ with not more than $c_5\log({c_6}/{\delta})$ charts, where $c_5, c_6$ depend only on $n$.

\smallskip

Algebraic geometry of complex algebraic hypersurfaces, considered from the point of view of doubling coverings and doubling inequalities provides a variety of important phenomena. We plan to present some further results in this direction separately. In particular, it would be very interesting to estimate the covering complexity of the Brieskorn-Milnor fibers $P(z)=\sum_{j=1}^n z_j^{k_j}=\e$. However, in this case the singular point of $P$ at the origin, although isolated, is not non-degenerate any more, and Theorem \ref{thm:main.hypers.1} does not work.

\end{document}